\newtheorem{dummy}{anything}
\newtheorem{Theorem}[dummy]{Theorem}
\newtheorem{Lemma}{Lemma}
\newtheorem{Definition}{Definition}
\newcommand{\cals}{{\mathcal S}}
\newcommand{\R}{{\mathbf R}}
\newcommand{\Z}{{\mathbf Z}}
\newcommand{\coker}{{\mbox{\rm coker}}}
\newcommand{\rk}{{\rm {rk}}}
\newcommand{\vol}{\rm {vol}}
\newcommand{\pcirc}{\kern .7pt {\scriptstyle \circ} \kern 1pt}
\newcommand{\eqref}[1]{(\ref{#1})}
\newcommand{\hfl}[2]{\smash{\mathop{\hbox to 1 truecm{\kern %
3pt\rightarrowfill\kern 3pt}}%
\limits^{\scriptstyle#1}_{\scriptstyle#2}}}
\newcommand{\cqfd}{\unskip\kern 6pt\penalty 500%
\raise -2pt\hbox{\vrule\vbox to10pt{\hrule width %
4pt\vfill\hrule}\vrule}\smallskip}
\title{On the conjecture of Kevin Walker}
\author{Michael Farber\\\small Department of Mathematical Sciences, University of Durham, UK \\\small email: Michael.Farber@durham.ac.uk, Fax: 0044-191-3343051
\and Jean-Claude Hausmann\\ \small Section mathematique, Universit\'e de
Gen\`eve, Suisse\\ \small email: hausmann@math.unige.ch, Fax: 0041-22-3791176
\and Dirk Sch\"utz\\ \small Department of Mathematical Sciences, University of Durham, UK\\ \small email: Dirk.Schuetz@durham.ac.uk, Fax: 0044-191-3343051}
\date{August 22, 2007}
\begin{document}
\maketitle 

\begin{abstract}
In 1985 Kevin Walker  in his study of
 topology of polygon
spaces \cite{Wa} raised an interesting conjecture in the spirit of
the well-known question \lq\lq Can you hear the shape of a
drum?\rq\rq\ of Marc Kac. Roughly, Walker's conjecture asks if one
can recover relative lengths of the bars of a linkage from
intrinsic algebraic properties of the cohomology algebra of its
configuration space. In this paper we prove that the conjecture is true for polygon spaces in $\R^3$.
We also prove that for planar polygon spaces the conjecture holds is several modified forms:
(a) if one takes into account the action of a natural involution on cohomology, (b) if the cohomology algebra of the involution's orbit space is known, or (c) if the length vector is normal. Some of our results allow the length vector to be non-generic, the corresponding polygon spaces have singularities. Our main tool is the study of the natural involution and its action on cohomology. A crucial role in our proof plays the solution of the isomorphism problem for monoidal rings due to J. Gubeladze.
\end{abstract}

\section{Introduction}

Let $\ell=(l_1, \dots, l_n)$ be a sequence positive real numbers.
The planar polygon space $M_\ell$ parameterizes shapes of all planar
$n$-gons having sides of length $l_1, \dots, l_n$. Formally,
$M_\ell$ is defined as
\begin{eqnarray}\label{ml}
M_\ell \, = \, \{(u_1, \dots, u_n)\in S^1\times\dots\times S^1; \,
\, \sum_{i=1}^n l_iu_i=0\}/{{\rm SO}(2)}
\end{eqnarray}
where the group ${\rm SO}(2)$ acts diagonally on the product
$S^1\times\dots\times S^1$. If the length vector $\ell$ is generic
(see below) then $M_\ell$ is a closed smooth manifold of dimension
$n-3$. If $\ell$ is not generic then $M_\ell$ is a smooth compact
$(n-3)$-manifold having finitely many singular points.

The polygon space $M_\ell$ emerges in topological robotics as the
configuration space of the planar linkage, a simple mechanism
consisting of $n$ bars of length $l_1, \dots, l_n$ connected by
revolving joints forming a closed planar polygonal chain. The
significance of spaces $M_\ell$ was also recognized in molecular
biology where these spaces describe shapes of closed molecular
chains. Statistical shape theory, see e.g. \cite{Kendall, HR}, is another area where the spaces
$M_\ell$ play an interesting role: they describe the space of shapes having
certain geometric properties with respect to the central point.

Inspired by the work of W.~Thurston and J.~Weeks on linkages\footnote{See also thesis of S. H. Niemann \cite{N} written in Oxford in 1978.}
\cite{TW}, the configuration space of generic polygon spaces were studied
by K.~Walker \cite{Wa}, A.~A.~Klyachko \cite{Kl}, M.~Kapovich and
J.~Millson \cite{KM1}, J.-Cl. Hausmann and A. Knutson \cite{HK}
and others. Non-generic polygon spaces were studied by A.~Wenger
\cite{We} and the Japanese school (see, e.g.~\cite{Ka}).
The Betti numbers of $M_\ell$ as functions of the length
vector $\ell$ are described in \cite{FS}; in particular, it was
shown in \cite{FS} that the integral homology $H_\ast(M_\ell)$ has
no $\Z$-torsion. A.~A.~Klyachko \cite{Kl} found the Betti numbers
of spatial polygon spaces and their integral cohomology ring is given
in~\cite{HK}.

The spatial polygon spaces $N_\ell$ (defined by equation (\ref{nell}) below) emerge also as spaces of semistable configurations of $n$ points in ${\mathbf {CP}}^1$ having weights $l_1, \dots, l_n$, modulo M\"obius transformations, see \cite{Kl}, \cite{KM}. Planar polygon spaces $M_\ell$ admit a similar description
with the real projective line ${\mathbf {RP}}^1$ replacing the complex projective line.

Walker's conjecture \cite{Wa} states that for a generic $\ell$ the
cohomology ring of $M_\ell$ determines the length vector $\ell$ up
to a natural equivalence (described below). In this paper we prove several results confirming this conjecture.
Firstly, we show
that this statement is indeed true if one takes into account a
natural involution acting on the cohomology. Our main result in this direction goes
a little further by allowing also non-generic length vectors. Secondly we show that Walker's conjecture in its original form holds
for normal length vectors. We also prove results in the spirit of the Walker conjecture for the spatial polygon spaces.

To state the Walker conjecture in full detail we need to explain the dependance of the configuration space $M_\ell$ on the
length vector
\begin{eqnarray}
\ell=(l_1, \dots,l_n)\in \R^n_+. \end{eqnarray} Here $\R^n_+$
denotes the set of vectors in $\R^n$ having nonnegative
coordinates. Clearly, $M_\ell=M_{t\ell}$ for any $t>0$. Also, it is easy to see that $M_\ell$ is diffeomorphic to $M_{\ell'}$ if
$\ell'$ is obtained from $\ell$ by permuting coordinates.

Denote by $A=A^{(n-1)}\subset \R^n_+$ the interior of the unit
simplex, i.e. the set given by the inequalities $l_1 >0, \dots,
l_n>0$, $\sum l_i =1$. One can view $A$ as the quotient space of $\R^n_+$ with respect to $\R_+$-action.
For any subset $J\subset \{1, \dots, n\}$
we denote by $H_J\subset \R^n$ the hyperplane defined by the
equation
\begin{eqnarray}\label{wall}\sum_{i\in J} l_i = \sum_{i\notin J}
l_i.\end{eqnarray}
One considers the following stratification
\begin{eqnarray}\label{strat}
A^{(0)}\subset A^{(1)}\subset \dots \subset A^{(n-1)}=A
\end{eqnarray}
Here the symbol $A^{(i)}$ denotes the set of points $\ell\in A$
lying in $\geq n-1-i$ linearly independent hyperplanes $H_J$ for
various subsets $J$. A {\it stratum} of dimension $k$ is a
connected component of the complement $A^{(k)}-A^{(k-1)}$. By
Theorem 1.1 of \cite{HR}, the smooth spaces\footnote{i.e. manifolds with singularities.} $M_\ell$ and $M_{\ell'}$
are diffeomorphic if the vectors $\ell$ and $\ell'$ belong to the
same stratum.

Strata of dimension $n-1$ are called {\it chambers}. A vector
$\ell$ lying in a chamber is called {\it generic}. Non-generic
length vectors lie in walls separating chambers and hence satisfy
linear equations (\ref{wall}) for some $J$. A linkage with
non-generic length vector is characterized by the property of
allowing collinear configurations, i.e. such that all its bars are
parallel to each other.

{\bf Walker's conjecture:} {\it Let $\ell, \ell'\in A$ be two generic
length vectors; if the corresponding polygon spaces $M_\ell$ and
$M_{\ell'}$ have isomorphic graded integral cohomology rings then
for some permutation $\sigma: \{1, \dots, n\} \to \{1, \dots, n\}$
the length vectors $\ell$ and $\sigma(\ell')$ lie in the same
chamber of $A$.}

In this paper we prove that Walker's conjecture holds for polygon spaces in $\R^3$ (see Theorem \ref{thmain31})
assuming that the number of links $n$ is distinct from $4$.
We also prove that the conjecture is true for planar polygon spaces in several slightly modified forms, see Theorems \ref{thmain}, \ref{thmain21}, \ref{thmain2}.

\section{Statements of the main results}

Polygon spaces (\ref{ml}) come with a natural
involution \begin{eqnarray}\label{invol} \tau: M_\ell \to M_\ell,
\quad \tau(u_1, \dots, u_n) = (\bar u_1, \dots, \bar u_n)
\end{eqnarray} induced by complex conjugation.
Geometrically, this involution associates to a polygonal shape the
shape of the reflected polygon. The fixed points of $\tau$ are the
collinear configurations, i.e. degenerate polygons. In particular
we see that $\tau: M_\ell\to M_\ell$ has no fixed points iff the
length vector $\ell$ is generic. Clearly, $\tau$ induces an
involution \begin{eqnarray}\label{involstar}
\tau^\ast:
H^\ast(M_\ell) \to H^\ast(M_\ell)
\end{eqnarray} on the cohomology
of $M_\ell$ with integral coefficients.\footnote{In this paper we adopt the following convention: we skip $\Z$ from notation when dealing with homology and cohomology with integral coefficients. We indicate explicitly all other coefficient groups.}

\begin{Theorem}\label{thmain} Suppose that two length vectors $\ell, \ell'\in
A^{(n-1)}$ are ordered, i.e. $\ell=(l_1, l_2, \dots, l_n)$ with $0<l_1\leq l_2\leq \dots \leq l_n$ and similarly for $\ell'$. If
there exists a graded ring isomorphism of the integral
cohomology algebras $$f: H^\ast(M_\ell) \to H^\ast(M_{\ell'})$$
commuting with the action of the involution (\ref{involstar})
then $\ell$ and
$\ell'$ lie in the same stratum of $A$. In particular,
under the above assumptions the moduli spaces $M_\ell$ and
$M_{\ell'}$ are $\tau$-equivariantly diffeomorphic.
\end{Theorem}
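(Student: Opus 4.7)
The plan is to show that the $\tau^*$-equivariant graded ring $H^*(M_\ell)$ determines the combinatorial data of $\ell$ that characterises its stratum. For an ordered length vector this data consists of the collection of subsets $J \subset \{1, \dots, n\}$ that are \emph{short} for $\ell$, together with the information of which walls $H_J$ contain $\ell$ (which distinguishes strata of the same dimension).

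The first step would be to invoke (or establish, in a preliminary section of the paper) an integral presentation of $H^*(M_\ell)$ in the spirit of Hausmann--Knutson, as a quotient of a polynomial ring generated in a fixed degree by a distinguished class $R$ and classes $V_1, \dots, V_n$, modulo an ideal $I_\ell$ whose generators are indexed by the short subsets of $\ell$. One then computes the action of $\tau^*$ on this presentation: since $\tau$ is induced by complex conjugation on the ambient product of circles, it preserves $R$ and acts on each $V_i$ by an explicit affine involution (a sign change, possibly twisted by $R$), so the $V_i$ are visible inside a specific $\tau^*$-eigenspace.

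The heart of the proof is to show that from the pair $(H^*(M_\ell), \tau^*)$ the unordered set $\{V_1, \dots, V_n\}$ is recoverable up to permutation. Ring-theoretically, this amounts to characterising the $V_i$ as a distinguished basis of the relevant $\tau^*$-eigenspace in a fixed degree, pinned down by further algebraic conditions (indecomposability, behaviour with respect to the relation ideal, and so on). Once the $V_i$ are identified up to the action of $S_n$, any $\tau^*$-equivariant ring isomorphism $f: H^*(M_\ell) \to H^*(M_{\ell'})$ induces a permutation $\sigma \in S_n$ matching $V_i$ with $V_{\sigma(i)}'$, and the equality of the respective relation ideals then forces $\ell$ and $\sigma(\ell')$ to share the same short-subset combinatorics and wall incidences. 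Since both vectors are ordered, $\sigma$ must preserve the ordering, whence $\ell$ and $\ell'$ lie in the same stratum of $A$. The final clause, that $M_\ell$ and $M_{\ell'}$ are $\tau$-equivariantly diffeomorphic, then follows from Theorem~1.1 of \cite{HR} together with the observation that the diffeomorphisms it produces are natural with respect to the canonical reflection $\tau$.

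The hardest step I expect will be the canonical identification of the set $\{V_1, \dots, V_n\}$. Without the involution this is essentially the content of the original (unmodified) Walker conjecture, which is the delicate point of the whole theory; the input of $\tau^*$-equivariance is supposed to provide just enough extra rigidity to select the distinguished generators up to relabelling. A secondary difficulty is the non-generic setting allowed by the theorem: $M_\ell$ acquires singularities, $\tau$ has fixed points along the collinear locus, and both the presentation of $H^*(M_\ell)$ and the detection of which walls $H_J$ pass through $\ell$ require a more delicate analysis than in the generic case.
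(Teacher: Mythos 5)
Your outline correctly identifies the target combinatorial invariant (the family of short subsets, cf.\ Lemma~\ref{stra} and Lemma~\ref{lm4}) and the role of the involution, but the step you yourself flag as the heart of the proof --- recovering the set of distinguished degree-one generators up to permutation from intrinsic algebraic data --- is left as a hope (``indecomposability, behaviour with respect to the relation ideal, and so on'') rather than an argument, and this is precisely where the real content lies. A graded ring isomorphism has no a priori reason to match monomial generators with monomial generators, and no elementary eigenspace or indecomposability criterion is known to pin them down. The paper's resolution is different in two essential ways. First, instead of working with a full integral presentation of $H^\ast(M_\ell)$ (which in the non-generic case is not of monomial type), it isolates the \emph{balanced subalgebra} $B^\ast_\ell$ of classes with $\tau^\ast u=(-1)^{\deg u}u$, shown in Theorem~\ref{thm2} to be exactly the exterior algebra $E(X_1,\dots,X_{n-1})/I$ with $I$ a monomial ideal indexed by long subsets containing $n$; the $\tau^\ast$-equivariance of $f$ is used only to guarantee that $f$ preserves this subalgebra, together with the Poincar\'e duality defect ideal $K^\ast_\ell\subset B^\ast_\ell$ (Theorem~\ref{defect}), which encodes the median subsets containing $n$ and handles the wall-incidence data in the singular case. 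Second, the recovery of the variables up to a bijection carrying $I$ to $I'$ is not done by hand but by invoking Gubeladze's solution of the isomorphism problem for commutative monoidal rings (Theorem~\ref{gu}) applied to $\Z_2\otimes B^\ast_\ell$ and to $\Z_2\otimes(B^\ast_\ell/K^\ast_\ell)$. Without this theorem, or a concrete substitute, your proposal does not close.

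A secondary error: you assert that because both vectors are ordered, the permutation $\sigma$ produced ``must preserve the ordering,'' and conclude directly. That is not true and not needed; the permutation matching the generators need not be the identity. What the paper proves instead (Lemma~\ref{gen}) is that for \emph{ordered} vectors, the existence of \emph{some} permutation fixing $n$ and carrying one family of short subsets to the other already forces the two families to be equal, via an inductive argument replacing $\sigma$ by $\alpha\circ\sigma$ for suitable transpositions $\alpha$ and using the monotonicity $l'_i\le l'_j$ for $i\le j$. You would need this lemma (or an equivalent) to finish even if the generator-recovery step were granted. Your final clause about the $\tau$-equivariant diffeomorphism coming from \cite{HR} agrees with the paper.
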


Let $\ell,\ell'\in A^{(n-1)}$ be length vectors (possibly non-generic) lying in the same stratum.
Then there exists a diffeomorphism\footnote{"Diffeomorphism" refers to the smooth structure on $M_{\ell}$,
see \cite[Section~2.2]{HR}} $\phi:M_{\ell'}{\to}
M_{\ell}$
which is equivariant with respect to the involution $\tau$.
Such a diffeomorphism was constructed in the proof of \cite[Theorem~1.1]{HR}, see \cite[p.~36]{HR}; compare \cite[Remark~3.3]{HR}.

Let $\bar M_\ell$ denote the factor-space of $M_\ell$ with respect to the involution (\ref{invol}). An alternative definition of $\bar M_\ell$ is given by
\begin{eqnarray}\label{mlbar}
\bar M_\ell \, = \, \{(u_1, \dots, u_n)\in S^1\times\dots\times S^1; \,
\, \sum_{i=1}^n l_iu_i=0\}/{{\rm O}(2)}.
\end{eqnarray}

\begin{Theorem}\label{thmain21} Suppose that two generic ordered length vectors $\ell, \ell'\in
A^{n-1}$ are such that there exists a graded algebra isomorphism
$$f: H^\ast(\bar M_\ell;\Z_2) \to H^\ast(\bar M_{\ell'};\Z_2)$$ of cohomology algebras with $\Z_2$ coefficients.
If $n\not= 4$ then $\ell$ and
$\ell'$ lie in the same chamber of $A$.
\end{Theorem}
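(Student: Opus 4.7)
The plan is to lift $f$ to a $\tau^*$-equivariant graded ring isomorphism between the $\Z_2$-cohomologies of $M_\ell$ and $M_{\ell'}$, and then invoke a mod-$2$ analogue of Theorem~\ref{thmain}. Since $\ell$ and $\ell'$ are generic, $\tau$ acts freely on both polygon spaces, so the quotient maps $p_\ell\colon M_\ell\to\bar M_\ell$ and $p_{\ell'}\colon M_{\ell'}\to\bar M_{\ell'}$ are regular $\Z_2$-covers, classified by characteristic classes $w_\ell\in H^1(\bar M_\ell;\Z_2)$ and $w_{\ell'}\in H^1(\bar M_{\ell'};\Z_2)$. The associated Gysin sequence
\[
\cdots\to H^k(\bar M_\ell;\Z_2)\stackrel{\cup w_\ell}{\longrightarrow} H^{k+1}(\bar M_\ell;\Z_2)\stackrel{p_\ell^*}{\longrightarrow} H^{k+1}(M_\ell;\Z_2)\to H^{k+1}(\bar M_\ell;\Z_2)\to\cdots
\]
reconstructs $H^*(M_\ell;\Z_2)$ as a ring, along with the $\tau^*$-action, from the pair $(H^*(\bar M_\ell;\Z_2),w_\ell)$. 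It therefore suffices to modify $f$ so that it matches $w_\ell$ with $w_{\ell'}$.

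The central task is to characterize $w_\ell$ intrinsically inside the graded ring $H^*(\bar M_\ell;\Z_2)$, so that any algebra isomorphism must carry it to $w_{\ell'}$. Using the Hausmann--Knutson presentation of $H^*(M_\ell;\Z_2)$ and the identification of $p_\ell^*H^*(\bar M_\ell;\Z_2)$ with the $\tau^*$-invariant subring, one writes $w_\ell$ explicitly in terms of natural degree-one generators and seeks a canonical algebraic property --- for instance of Poincar\'e-duality or Stiefel--Whitney type --- that isolates it among all degree-one classes. The hypothesis $n\neq 4$ should enter precisely here: it ensures that $\bar M_\ell$ has dimension $\geq 2$, leaving enough cohomological structure to support such a characterization; for $n=4$ the space $\bar M_\ell$ is one-dimensional and its cohomology ring is too small to single out $w_\ell$ from competing degree-one candidates.

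Once the $\tau^*$-equivariant isomorphism $\tilde f\colon H^*(M_\ell;\Z_2)\to H^*(M_{\ell'};\Z_2)$ is in hand, I would conclude by a $\Z_2$-coefficient version of Theorem~\ref{thmain}. Because the integral cohomology of $M_\ell$ is torsion free by~\cite{FS}, the combinatorial data on ``short'' subsets $J\subset\{1,\dots,n\}$ used in the integral argument remain detectable modulo $2$; adapting the proof of Theorem~\ref{thmain} to $\Z_2$ coefficients (the main inputs being the $\tau^*$-action and Gubeladze's isomorphism theorem for monoidal rings) should yield that $\ell$ and $\ell'$ lie in the same chamber. The principal obstacle I anticipate is the first step: extracting a purely ring-theoretic characterization of $w_\ell$ that is preserved by arbitrary algebra isomorphisms of $H^*(\bar M_\ell;\Z_2)$, together with a precise accounting of why this characterization collapses exactly at $n=4$.
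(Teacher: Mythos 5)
Your overall strategy points in the right direction, but the step you yourself flag as ``the principal obstacle'' is in fact the entire content of the proof, and you do not supply it. The paper resolves it as follows: using the Hausmann--Knutson presentation $H^*(\bar M_\ell;\Z_2)=\Z_2[R,V_1,\dots,V_{n-1}]/\mathcal{I}_\ell$, the relation $V_i^2+RV_i=0$ shows that $u=w_1(\ell)=R$ satisfies $v^2=vu$ for all $v\in H^1(\bar M_\ell;\Z_2)$; uniqueness of such a $u$ holds because any difference $w=u-u'$ annihilates all of $H^1$, degree-one classes generate the ring, and Poincar\'e duality then forces $w=0$ provided $1<n-3$, i.e.\ $n>4$. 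Without an argument of this kind your proposal does not get off the ground, since nothing guarantees that an arbitrary graded algebra isomorphism $f$ matches $w_\ell$ with $w_{\ell'}$. (Your heuristic for why $n=4$ fails is consistent with the paper's counterexample, but it is not a proof of uniqueness for $n>4$.)

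There is a second, more structural problem with your reduction. You propose to reconstruct $H^*(M_\ell;\Z_2)$ together with its $\tau^*$-action via the Gysin sequence and then invoke ``a mod-$2$ analogue of Theorem~\ref{thmain}.'' But the proof of Theorem~\ref{thmain} runs through the balanced subalgebra $B^*_\ell$, which is defined by the eigenvalue condition $\tau^*(u)=(-1)^{\deg u}u$ on \emph{integral} cohomology; with $\Z_2$ coefficients the sign $(-1)^{\deg u}$ collapses to $+1$, so the $\tau^*$-action on mod-$2$ cohomology cannot isolate $\Z_2\otimes B^*_\ell$ (its fixed subring may be strictly larger), and a ``$\Z_2$-coefficient version of Theorem~\ref{thmain}'' is not available in the form you need. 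The paper avoids this entirely: Lemma~\ref{lm7} identifies the quotient ring $H^*(\bar M_\ell;\Z_2)/(w_1(\ell))$ directly with $\Z_2\otimes B^*_\ell$ (the object to which Gubeladze's theorem is actually applied in the proof of Theorem~\ref{thmain}), so once $f(w_1(\ell))=w_1(\ell')$ is established one passes immediately to an isomorphism $\Z_2\otimes B^*_\ell\to\Z_2\otimes B^*_{\ell'}$ and reruns the Gubeladze argument. You should replace your Gysin-sequence lifting step with this quotient-ring identification.
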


Theorem \ref{thmain21} is false for $n=4$. Indeed, for the length vectors $\ell=(1,1,1,2)$
and $\ell'=(1,2,2,2)$ the manifolds  $\bar M_\ell$ and $\bar M_{\ell'}$ are circles. However $M_\ell$ and $M_{\ell'}$ are not diffeomorphic (the first is $S^1$ and the second is $S^1\sqcup S^1$) and thus $\ell$ and $\ell'$ do not lie in the same chamber of $A$.

In this paper we also prove a result in the spirit of Walker's conjecture for the spatial polygon spaces. These spaces are defined by
\begin{eqnarray}\label{nell}
N_\ell \, = \, \{(u_1, \dots, u_n)\in S^2\times\dots\times S^2; \,
\, \sum_{i=1}^n l_iu_i=0\}/{{\rm SO}(3)}.
\end{eqnarray}
Points of $N_\ell$ parameterize the shapes of $n$-gons in $\R^3$ having sides of length $\ell=(l_1, \dots, l_n)$.
If the length vector $\ell$ is generic then $N_\ell$ is a closed smooth manifold of dimension $2(n-3)$.

\begin{Theorem}\label{thmain31}
 Suppose that two generic ordered length vectors $\ell, \ell'\in
A^{(n-1)}$ are such that there exists a graded algebra isomorphism
$$f: H^\ast(N_\ell;\Z_2) \to H^\ast(N_{\ell'};\Z_2)$$ of cohomology algebras with $\Z_2$ coefficients.
If $n\not= 4$ then $\ell$ and
$\ell'$ lie in the same chamber of $A$. This theorem remains true if the cohomology algebras are taken with integral coefficients.
\end{Theorem}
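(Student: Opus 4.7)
The plan is to reduce Theorem \ref{thmain31} to Theorem \ref{thmain21} via the known relationship between the cohomology of the spatial polygon space $N_\ell$ and that of the quotient planar polygon space $\bar M_\ell$. The computations of Klyachko \cite{Kl} and Hausmann--Knutson \cite{HK} show that $H^*(N_\ell)$ is torsion-free and concentrated in even degrees, and that it admits a presentation whose generators have degree $2$ and whose relations are governed by exactly the same short/long subset combinatorics of $\ell$ that determines $H^*(\bar M_\ell;\Z_2)$, but with all cohomological degrees doubled. In particular, there is a canonical ring isomorphism $\Phi_\ell$ from $H^*(\bar M_\ell;\Z_2)$ onto $H^*(N_\ell;\Z_2)$ sending the degree-$k$ summand of the source onto the degree-$2k$ summand of the target. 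Since $H^*(N_\ell;\Z_2)$ is concentrated in even degrees, this doubling correspondence depends only on the abstract graded ring structure of the target and involves no choice.

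Given a graded ring isomorphism $f\:H^*(N_\ell;\Z_2)\to H^*(N_{\ell'};\Z_2)$, I would transport it through $\Phi_\ell$ and $\Phi_{\ell'}$ to obtain a graded ring isomorphism
\[
\bar f\: H^*(\bar M_\ell;\Z_2)\to H^*(\bar M_{\ell'};\Z_2).
\]
Theorem \ref{thmain21}, applicable since $n\neq 4$ and $\ell,\ell'$ are generic and ordered by hypothesis, then forces $\ell$ and $\ell'$ to lie in the same chamber of $A$, completing the mod-$2$ case. The integral case reduces to the mod-$2$ case: since $H^*(N_\ell;\Z)$ is torsion-free, the universal coefficient theorem gives $H^*(N_\ell;\Z_2) \cong H^*(N_\ell;\Z) \otimes \Z_2$ as graded rings, so any integral ring isomorphism descends to a $\Z_2$ ring isomorphism already handled.

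The main obstacle is the first step: establishing $\Phi_\ell$ as a correspondence determined by the abstract graded ring $H^*(N_\ell;\Z_2)$, not merely as an isomorphism of specific presentations. Concretely, one must verify that the Hausmann--Knutson presentation of $H^*(N_\ell;\Z_2)$, with all degrees halved, matches the presentation of $H^*(\bar M_\ell;\Z_2)$ on the nose, so that any graded ring isomorphism of spatial cohomologies descends to a graded ring isomorphism of the corresponding $\bar M$-cohomologies. Once this degree-doubling correspondence is pinned down, the reduction to Theorem \ref{thmain21} is formal; the genericity of $\ell,\ell'$ built into the hypothesis of Theorem \ref{thmain31} keeps us within the regime covered by Theorem \ref{thmain21}.
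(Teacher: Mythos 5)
Your proposal is correct and follows essentially the same route as the paper: reduce to Theorem \ref{thmain21} via the Hausmann--Knutson presentation of $H^*(N_\ell;\Z_2)$, which matches that of $H^*(\bar M_\ell;\Z_2)$ with all degrees doubled, and handle the integral case by tensoring with $\Z_2$ using torsion-freeness. The only superfluous worry is canonicity of $\Phi_\ell$ --- mere existence of some graded-ring isomorphism after halving degrees (which the matching presentations provide, since $H^*(N_\ell;\Z_2)$ is concentrated in even degrees) already suffices to transport $f$.
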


Theorem \ref{thmain31} is false for $n=4$: for length vectors $\ell=(1,1,1,2)$
and $\ell'=(1,2,2,2)$ lying in different chambers (see above) the manifolds  $N_\ell$ and $N_{\ell'}$ are both diffeomorphic to $S^2$.

To state another main result of this paper we need to
introduce some terminology. First we recall the notions of short, long and median subsets, see
\cite{HK}, \cite{FS}. Given a length vector $\ell=(l_1, l_2,
\dots, l_n)$, a subset of the set of indices $J\subset \{1, 2,
\dots, n\}$ is called {\it short} if $$\sum_{i\in J} l_i <
\sum_{i\notin J} l_i.$$ The complement of a short subset is called
{\it long}. A subset $J$ is called {\it median} if $$\sum_{i\in J}
l_i = \sum_{i\notin J} l_i.$$

The following simple fact relates these notions to stratification (\ref{strat}).

\begin{Lemma} \label{stra}
Two length vectors $\ell, \ell'\in A^{(n-1)}$ lie in the same stratum of $A^{(n-1)}$ if and only if, for all subsets $J\subset \{1, 2, \dots, n\}$,
$J$ is short with respect to $\ell$ if and only if it is short with respect to $\ell'$.
\end{Lemma}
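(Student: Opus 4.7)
The plan is to reformulate short/long/median in terms of the signs of the linear functionals $\varphi_J(\ell) := \sum_{i\notin J} l_i - \sum_{i\in J} l_i$ on $\R^n$. Indeed, $J$ is short, long, or median with respect to $\ell$ precisely when $\varphi_J(\ell)$ is positive, negative, or zero respectively, and $\ell\in H_J$ iff $\varphi_J(\ell)=0$. Thus the combinatorial hypothesis ``$J$ is short for $\ell$ iff short for $\ell'$, for every $J$'' is equivalent to $\mathrm{sign}\,\varphi_J(\ell)=\mathrm{sign}\,\varphi_J(\ell')$ for all $J\subset\{1,\dots,n\}$, and the lemma becomes the equivalence between this sign-pattern condition and membership of $\ell,\ell'$ in a common stratum.

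For the ``if'' direction, assume the sign-pattern agrees and consider the straight segment $\ell_t:=(1-t)\ell+t\ell'$, $t\in[0,1]$. Convexity of $A$ gives $\ell_t\in A$, and linearity of $\varphi_J$ gives $\mathrm{sign}\,\varphi_J(\ell_t)=\mathrm{sign}\,\varphi_J(\ell)$ for every $J$ and every $t$. In particular the set $\mathcal{J}(\ell_t):=\{J:\ell_t\in H_J\}$ is independent of $t$, so the number of linearly independent hyperplanes through $\ell_t$ is constant too, placing the whole segment in $A^{(k)}-A^{(k-1)}$ for a single $k$ and, by connectedness, in a single stratum.

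For the ``only if'' direction, take a continuous path $\gamma\colon[0,1]\to S$ in the common stratum $S$ of dimension $k$. The crux is to show that $\mathcal{J}(\gamma(t))$ is constant along $\gamma$. If $\varphi_J(\gamma(t_0))\neq 0$ then the same holds in a neighborhood of $t_0$ by continuity, giving the upper-semicontinuity inclusion $\mathcal{J}(\gamma(t))\subseteq \mathcal{J}(\gamma(t_0))$ for $t$ near $t_0$. The affine subspaces $V_t:=\bigcap_{J\in\mathcal{J}(\gamma(t))}H_J$ thus satisfy $V_{t_0}\subseteq V_t$; both have dimension $k$ (since the rank of the active system is constant on $S$), so $V_{t_0}=V_t$, and in particular $\gamma(t)\in V_{t_0}$, which forces $\mathcal{J}(\gamma(t_0))\subseteq\mathcal{J}(\gamma(t))$. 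Hence $\mathcal{J}\circ\gamma$ is locally constant and therefore constant, giving equality of median subsets of $\ell$ and $\ell'$. For each non-median $J$ the continuous function $\varphi_J\circ\gamma$ is nowhere zero, hence of constant sign, so $J$ is short for $\ell$ iff short for $\ell'$. The main technical obstacle is precisely this invariance of the active-hyperplane set $\mathcal{J}$ along a path inside a stratum; once it is in place, both directions of the equivalence reduce to the linearity of the $\varphi_J$.
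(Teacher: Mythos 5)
Your proof is correct, and it does noticeably more than the paper's own proof, which consists of the single complementation observation (that agreement on short subsets forces agreement on long and median subsets, since $J$ is long iff $\bar J$ is short and median iff neither is) followed by the bare assertion that identical short/long/median families ``clearly'' put $\ell$ and $\ell'$ in the same stratum; the converse implication is not discussed there at all. You make the same complementation observation, recast as equality of the sign patterns of the linear functionals $\varphi_J$, and then actually supply the geometry in both directions: convexity of $A$ together with linearity of the $\varphi_J$ keeps the segment $\ell_t$ inside a single set $A^{(k)}-A^{(k-1)}$, and local constancy of the active set $\mathcal{J}$ along a path in a stratum (via the equal-dimension inclusion $V_{t_0}\subseteq V_t$) gives the reverse direction. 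Two cosmetic remarks. First, the subspaces $V_t\subset\R^n$ have dimension $k+1$, not $k$ --- it is their traces on the affine hyperplane $\sum l_i=1$ that are $k$-dimensional --- but all your argument needs is $\dim V_{t_0}=\dim V_t$, which holds because the rank of the active system equals $n-1-k$ at every point of the stratum. Second, choosing a continuous path in the stratum uses that connected components of $A^{(k)}-A^{(k-1)}$ are path-connected; this is true (the set is a finite Boolean combination of linear conditions, hence locally path-connected), but deserves a word. Neither point affects the validity of the argument.
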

\begin{proof} $J$ is long iff the complement $\bar J$ is short and $J$ is median iff neither $J$ nor $\bar J$ is short.
Hence, vectors $\ell, \ell'$ satisfying conditions of the lemma have identical families of short, long and median subsets. This clearly implies that
$\ell$ and $\ell'$ lie in the same stratum of $A$.
\end{proof}

\begin{Definition}\label{defnormal}
A length vector $\ell=(l_1, \dots, l_n)$ is called normal if \begin{eqnarray}\label{inter}\cap J\, \not=\, \emptyset \end{eqnarray}
where $J$ runs over all subsets $J\subset \{1, \dots, n\}$ with $|J|=3$ which are long with respect to $\ell$.
A stratum of $A^{(n-1)}$ is called normal if it contains a normal vector.
\end{Definition}

Clearly, any vector lying in a normal stratum is normal.
A length vector $\ell$ with the property that all subsets $J$ of cardinality $3$ are short or median
is normal
 since then the intersection (\ref{inter})
equals $\{1, \dots, n\}$ as the intersection of the empty family.

If $\ell=(l_1, \dots, l_n)$ where $0<l_1\le l_2\le \dots \le l_n$
then $\ell$ is normal if and only if the set $\{n-3, n-2, n-1\}$
is short or median with respect to $\ell$. Indeed, if this set is long then the sets $\{n-3, n-2,
n\}$, $\{n-3, n-1, n\}$, $\{n-2, n-1, n\}$ are long as well and
the intersection of these four sets of cardinality three is empty.
On the other hand, if the set $\{n-3, n-2, n-1\}$ is short or median then
any long subset of cardinality three $J\subset \{1, \dots, n\}$ contains $n$ and therefore
the intersection (\ref{inter}) also
contains $n$.


Examples of non-normal length vectors are $(1,1,1,1,1)$ (for
$n=5$) and $(1,1,2,2,2,3)$ for $n=6$. Only 7 chambers out 21 are normal for $n=6$.
However, for large $n$ it is very likely that a randomly
selected length vector is normal. For $n=9$, where there are
175429 chambers up to permutation, 86\% of them
are normal. It is shown in \cite{Fa} that the $(n-1)$-dimensional volume of the union $\mathcal N_n\subset A^{n-1}$ of all normal chambers satisfies
$$\frac{\vol(A^{n-1} - {\mathcal N}_n)}{\vol(A^{n-1})} \, <\,  \frac{24 n^6}{2^n},$$
i.e. for large $n$ the relative volume of the union of non-normal chambers is exponentially small.

\begin{Theorem}\label{thmain2}
Suppose that $\ell, \ell'\in A^{(n-1)}$ are two ordered length
vectors such that there exists a graded algebra isomorphism
between the integral cohomology algebras $H^\ast(M_\ell) \to
H^\ast(M_{\ell'})$. Assume that one of the vectors $\ell,
\ell'$ is normal. Then the other vector is normal as well and
$\ell$ and $\ell'$ lie in the same
stratum of the simplex $A$.
\end{Theorem}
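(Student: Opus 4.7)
The plan is to reduce Theorem~\ref{thmain2} to Theorem~\ref{thmain}. If we can show that any graded ring isomorphism $f\colon H^\ast(M_\ell)\to H^\ast(M_{\ell'})$ must (possibly after composing with an automorphism of the target) commute with the involution $\tau^\ast$, then Theorem~\ref{thmain} immediately yields that $\ell$ and $\ell'$ lie in the same stratum. The whole task is therefore to recover the action of $\tau^\ast$ from the graded ring structure under the normality hypothesis, and in parallel to show that normality itself is detectable from the ring (so that $\ell'$ is normal whenever $\ell$ is).

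I would start by recalling the standard description of $H^\ast(M_\ell;\Z)$ (Hausmann--Knutson) in terms of distinguished degree-one generators $R$ and $V_1,\dots,V_n$, one $V_i$ per bar, with relations encoding the short/long combinatorics of $\ell$, together with the explicit action of $\tau^\ast$ on these generators. The core objective is to pin down the unordered set $\{V_1,\dots,V_n\}$ (up to signs and permutation) intrinsically inside $H^1(M_\ell)$; any ring isomorphism must then carry this distinguished set to its counterpart, which automatically means that it conjugates $\tau^\ast$ to $\tau^\ast$. The mechanism, announced in the abstract, is Gubeladze's solution of the isomorphism problem for affine semigroup (monoidal) rings. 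Concretely, I would try to exhibit $H^\ast(M_\ell)$ (or an appropriate canonical sub- or quotient ring) in a form which, precisely when $\ell$ is normal, is a monoidal ring of the type to which Gubeladze's theorem applies. Gubeladze's theorem would then force every ring isomorphism to descend from an isomorphism of the underlying affine semigroups, and therefore to permute the distinguished generators $V_i$ up to signs.

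Normality is crucial here: the condition that the long $3$-subsets have a common index supplies precisely the combinatorial rigidity that fails for examples like $(1,1,1,1,1)$ and $(1,1,2,2,2,3)$. In particular, the same presentation that triggers Gubeladze's theorem should detect normality algebraically, which is how one concludes that the hypothetical isomorphism forces $\ell'$ to be normal as well.

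The main obstacle I anticipate is the middle step: constructing the correct monoidal presentation of $H^\ast(M_\ell)$ in the normal case and verifying the hypotheses of Gubeladze's isomorphism theorem. This is where the geometric hypothesis must be used delicately, since for non-normal $\ell$ the required semigroup structure is genuinely obstructed. Once the set $\{V_1,\dots,V_n\}$ has been characterized intrinsically and $f$ shown to respect it up to signs and permutation, $f$ commutes with $\tau^\ast$ and Theorem~\ref{thmain} concludes the proof; simultaneously, the algebraic fingerprint of normality is preserved by $f$, so $\ell'$ is normal too.
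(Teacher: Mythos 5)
Your overall strategy (use normality to make the relevant structure intrinsic, then invoke Gubeladze) points in the right direction, but the plan has a gap at its central step. There is no presentation of the integral cohomology ring $H^\ast(M_\ell)$ of the \emph{planar} polygon space by degree-one generators $R,V_1,\dots,V_{n-1}$: the Hausmann--Knutson presentation you invoke is for $H^\ast(\bar M_\ell;\Z_2)$ (and, with generators in degree two, for the spatial space $N_\ell$), and $H^\ast(M_\ell)$ is in general not generated in degree one. Consequently, even if you could intrinsically pin down a distinguished set of degree-one classes up to sign and permutation, that would only control the subalgebra they generate --- which for normal $\ell$ is exactly the balanced subalgebra $B^\ast_\ell$, in general a proper subalgebra --- and would not determine $\tau^\ast$ on all of $H^\ast(M_\ell)$; so you cannot literally reduce to Theorem~\ref{thmain} by making $f$ equivariant. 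The paper's proof sidesteps this: Lemma~\ref{normal} shows that $\ell$ is normal iff $H^1(M_\ell)=B^1_\ell$, so that for normal $\ell$ the balanced subalgebra \emph{is} the subalgebra generated by $H^1(M_\ell)$ and is therefore preserved by any graded ring isomorphism $f$ (no equivariance needed); one then reruns the Gubeladze argument from the proof of Theorem~\ref{thmain} on $\Z_2\otimes B^\ast_\ell$ and $\Z_2\otimes(B^\ast_\ell/K^\ast_\ell)$, rather than trying to exhibit $H^\ast(M_\ell)$ itself as a monoidal ring, which is not available.

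A second gap: you assert that the presentation "should detect normality algebraically" but give no criterion, and this is where real work is needed. The paper supplies one (Lemma~\ref{normal2}): when $b_0(M_\ell)=b_{n-3}(M_\ell)=1$, the vector $\ell$ is normal if and only if the $(n-3)$-fold cup product $H^1(M_\ell)^{\otimes(n-3)}\to H^{n-3}(M_\ell)$ vanishes; proving the "only if" direction requires the Poincar\'e-duality-defect analysis of Section~\ref{secpd}, since $\ell$ is allowed to be non-generic and $M_\ell$ singular. The remaining degenerate cases, where $b_0$ or $b_{n-3}$ is not $1$, must be handled separately (the paper does this via the table of low- and top-degree Betti numbers from the proof of Theorem~\ref{thmain}, which in those cases already determine the stratum). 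Without these two ingredients --- the intrinsic cup-product criterion for normality and the replacement of "equivariance of $f$" by "preservation of the subalgebra generated by degree-one classes" --- your outline cannot be completed as stated.
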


Consider the action of the symmetric group $\Sigma_n$ on the simplex $A^{(n-1)}$ induced by permutations of vertices.
This action defines an action of $\Sigma_n$ on the set of strata and chambers
and we
denote by $c_n$ and by $c^\ast_n$
the number of distinct
$\Sigma_n$-orbits of chambers (or chambers consisting of normal length vectors, respectively).

Theorems \ref{thmain} - \ref{thmain2} imply:
\begin{Theorem}
(a) For $n\not=4$ the number of distinct diffeomorphism types of manifolds $N_\ell$, where $\ell$ runs over all generic vectors of $A^{(n-1)}$, equals $c_n$;

(b) for $n\not=4$ the number of distinct diffeomorphism types of manifolds $\bar M_\ell$, where $\ell$ runs over all generic vectors of $A^{(n-1)}$, equals $c_n$;

(c) the number $x_n$ of distinct diffeomorphism types of manifolds $M_\ell$, where $\ell$ runs over all generic vectors of $A^{(n-1)}$, satisfies $c^\ast_n \leq x_n \leq c_n$.

(d) the number of distinct diffeomorphism types of manifolds with singularities $M_\ell$, where $\ell$ varies in $A^{(n-1)}$, is bounded above by the number of distinct $\Sigma_n$-orbits of strata of $A^{(n-1)}$ and is bounded below by the number of distinct $\Sigma_n$-orbits of normal strata of $A^{(n-1)}$.

Statements (a), (b), (c), (d) remain true if one replaces the words \lq\lq diffeomorphism types\rq\rq\, by \lq\lq homeomorphism types\rq\rq\, or by \lq\lq homotopy types\rq\rq.
\end{Theorem}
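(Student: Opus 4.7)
The strategy is to establish matching upper and lower bounds in each part, using \cite[Theorem~1.1]{HR} together with permutation invariance for the upper bounds, and Theorems \ref{thmain21}, \ref{thmain31}, \ref{thmain2} above for the lower bounds.

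For the upper bounds, \cite[Theorem~1.1]{HR} asserts that length vectors in a common stratum yield diffeomorphic polygon spaces $M_\ell$, $\bar M_\ell$, and $N_\ell$, and a coordinate permutation of $\ell$ produces a diffeomorphic space in each case. Hence the diffeomorphism type depends only on the $\Sigma_n$-orbit of the stratum containing $\ell$. Restricting to generic $\ell$ this gives the upper bound $c_n$ in parts (a), (b), (c); for part (d) it gives the upper bound by the number of $\Sigma_n$-orbits of strata. Since diffeomorphism implies homeomorphism implies homotopy equivalence, these upper bounds persist under the coarser equivalence relations.

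For the lower bounds in (a) and (b), I would apply Theorems \ref{thmain31} and \ref{thmain21} respectively. A homotopy equivalence $N_\ell \simeq N_{\ell'}$ (resp. $\bar M_\ell \simeq \bar M_{\ell'}$) induces a graded ring isomorphism on cohomology with $\Z_2$ (or integral) coefficients; after choosing the unique ordered representative in each $\Sigma_n$-orbit of chambers, the hypothesis of the relevant theorem is satisfied, and since $n \neq 4$ one concludes that the two representatives lie in the same chamber, hence in the same $\Sigma_n$-orbit. Thus distinct orbits produce non-homotopy-equivalent spaces, forcing the lower bound $c_n$ and establishing the equalities in (a) and (b) simultaneously for diffeomorphism, homeomorphism, and homotopy type.

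For (c) and (d), I invoke Theorem \ref{thmain2}: if $\ell$ is normal and there is a graded ring isomorphism $H^\ast(M_\ell) \cong H^\ast(M_{\ell'})$, then $\ell'$ is normal as well and, after reordering, lies in the same stratum as $\ell$. Consequently each $\Sigma_n$-orbit of normal chambers (respectively, normal strata) contributes a distinct homotopy type of $M_\ell$, yielding $c^\ast_n \leq x_n$ in (c) and the asserted lower bound in (d); the upper bounds were established in the first step. The proof is essentially a packaging of the earlier theorems and I do not expect a genuine obstacle; the only subtlety is matching the $\Sigma_n$-orbit point of view with the "ordered" hypothesis appearing in the statements of Theorems \ref{thmain21}, \ref{thmain31}, \ref{thmain2}, which is handled uniformly by passing to ordered representatives.
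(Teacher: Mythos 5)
Your proposal is correct and is essentially the paper's own argument: the paper gives no separate proof beyond the remark that Theorems \ref{thmain}--\ref{thmain2} imply the statement, the intended reasoning being exactly your packaging (upper bounds from \cite[Theorem~1.1]{HR} plus permutation invariance, lower bounds from Theorems \ref{thmain21}, \ref{thmain31}, \ref{thmain2} applied to the cohomology isomorphism induced by a homotopy equivalence, after passing to ordered representatives).
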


It is an interesting combinatorial problem to find explicit formulae
for the numbers $c_n$ and $c_n^*$ and to understand their behavior for large $n$ .
For $n\leq 9$, the numbers $c_n$ have
been determined in \cite{HR}, by giving an explicit list of the
chambers. The following table gives the values $c_n$ and $c_n^\ast$ for $n\le 9$:
\vskip 0.5cm
\begin{center}
\begin{tabular}{|c|c|c|c|c|c|c|c|}
\hline
$n$ &\footnotesize 3&\footnotesize 4&\footnotesize 5&\
\footnotesize 6 &\footnotesize 7&\footnotesize 8&\footnotesize 9
\\[1mm] \hline \hline
\small $c_n$ &\footnotesize 2 &\footnotesize 3&
\footnotesize 7&\footnotesize 21&\footnotesize 135&
\footnotesize 2470&  \footnotesize  175428
\\[1mm] \hline
\small $c_n^*$
& \footnotesize 1& \footnotesize 1& \footnotesize 2
& \footnotesize 7 & \footnotesize 65
& \footnotesize 1700 & \footnotesize 151317 \\
\hline
\end{tabular}
\end{center}
\vskip 0.2cm

Proofs of Theorems \ref{thmain} - \ref{thmain2} are given in \S \S \ref{prfthm1}, \ref{prfthm34}, \ref{prfthm2}. In \S \S 3, 4 we describe some
auxiliary results which are used in the proofs.

\section{The balanced subalgebra}\label{balanced}

In this section we will study the action of the involution (\ref{invol}) on the integral cohomology of planar polygon space $M_\ell$.

An integral cohomology class $u\in H^i(M_\ell)$ will be called
{\it balanced} if
\begin{eqnarray}
\tau^\ast(u) = (-1)^{\deg u} u.
\end{eqnarray}
The product of balanced cohomology classes is balanced. The set of
all balanced cohomology classes forms a graded subalgebra
\begin{eqnarray} B^\ast_\ell\subset H^\ast(M_\ell).\end{eqnarray}
In this section we describe explicitly the structure of the
balanced subalgebra in terms of the length vector $\ell$.

Let us assume that the length vector $\ell$ is {\it ordered}, i.e. it satisfies
\begin{eqnarray}\label{ordered}
0\leq l_1\leq l_2\leq \dots \leq l_n.
\end{eqnarray}
It is well known \cite{KM1} that $M_\ell$ is empty if and only if
the subset $J=\{n\}$ is long.

Next we describe specific cohomology classes
\begin{eqnarray}\label{generators}
X_1, X_2, \dots, X_{n-1}\, \in\,  H^1(M_\ell).
\end{eqnarray}
Consider the map $\phi_i: M_\ell \to S^1$ given by $$\phi_i(u_1, u_2, \dots, u_n)= u_iu_n^{-1}\in S^1.$$ Here $i=1, 2, \dots, n$ and $\phi_i$ associates to a configuration the angle between the links number $i$ and $n$. We denote by $X_i\in H^1(M_\ell)$ the induced class $X_i=\phi^\ast_i([S^1])$ where $[S^1]$ denote the fundamental class of the circle oriented in the usual anticlockwise manner.

Since the complex conjugation reverses the orientation of the circle, we obtain that $\tau^\ast(X_i)=-X_i$, i.e. cohomology classes $X_i$ are balanced.

\begin{Theorem}\label{thm2}
Assume that $\ell=(l_1, \dots, l_n)$
satisfies (\ref{ordered}) and the single element subset $\{n\}$ is
short. Then the balanced subalgebra $B_\ell^\ast$, viewed as a
graded skew-commutative ring, is generated by the classes $X_1, \dots, X_{n-1}\in H^1(M_\ell)$ and is isomorphic to the factor ring
$$E(X_1,\dots, X_{n-1})/I$$ where $E(X_1, \dots, X_{n-1})$ denotes
the exterior algebra having degree one generators $X_1, \dots,
X_{n-1}$ and $I\subset E(X_1, \dots, X_{n-1})$ denotes the ideal
generated by the monomials $$X_{r_1}X_{r_2}\dots X_{r_i},$$ one
for each sequence $1\leq r_1< r_2< r_3< \dots <r_i<n$ such that
the subset $$\{r_1, \dots, r_i\}\cup \{n\}\subset \{1, \dots,
n\}$$ is long.
\end{Theorem}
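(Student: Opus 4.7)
The plan is to interpret the classes $X_1,\dots,X_{n-1}$ through a natural embedding of $M_\ell$ into a torus, deduce the relations in $I$ from a simple geometric obstruction, and finish by matching ranks.

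First I would set up the embedding. Using the $SO(2)$-action to fix the slice $u_n=1$ identifies $M_\ell$ with the real algebraic subvariety
$$
\{(u_1,\dots,u_{n-1})\in (S^1)^{n-1}\, :\, l_1u_1+\dots+l_{n-1}u_{n-1}+l_n=0\}\,\subset\,(S^1)^{n-1}.
$$
Call this embedding $\Phi$. By construction $X_i=\Phi^\ast(\omega_i)$, where $\omega_i\in H^1((S^1)^{n-1})$ is the fundamental class of the $i$-th circle factor. Since complex conjugation reverses orientation on $S^1$ we have $\tau^\ast X_i=-X_i$, so every monomial in the $X_i$'s is balanced, and the exterior algebra homomorphism $\Phi^\ast\: E(\omega_1,\dots,\omega_{n-1})\to H^\ast(M_\ell)$ factors through $B^\ast_\ell$.

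Next I would verify that the ideal $I$ lies in $\ker\Phi^\ast$. Given $T=\{r_1<\dots<r_k\}\subset\{1,\dots,n-1\}$ with $T\cup\{n\}$ long, consider the coordinate projection $\pi_T\: (S^1)^{n-1}\to(S^1)^k$ and the composition $\Psi_T=\pi_T\pcirc\Phi\: M_\ell\to(S^1)^k$. A point $c\in (S^1)^k$ belongs to $\Psi_T(M_\ell)$ iff one can solve $\sum_{i\notin T\cup\{n\}}l_iu_i=-l_n-\sum_{j}l_{r_j}c_j$ in $u_i\in S^1$, equivalently iff $|l_n+\sum_jl_{r_j}c_j|\le \sum_{i\notin T\cup\{n\}}l_i$. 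Plugging $c=(1,\dots,1)$ gives modulus $l_n+\sum_jl_{r_j}$, which strictly exceeds $\sum_{i\notin T\cup\{n\}}l_i$ by longness. Hence $\Psi_T$ factors through $(S^1)^k\setminus\{(1,\dots,1)\}$, a space homotopy equivalent to a $(k-1)$-complex with trivial $H^k$. Therefore $X_{r_1}\cdots X_{r_k}=\Psi_T^\ast(\omega_1\cdots\omega_k)=0$, establishing the surjection
$$
\bar\Phi^\ast\: E(X_1,\dots,X_{n-1})/I\,\twoheadrightarrow\,\mathrm{image}(\Phi^\ast)\subseteq B^\ast_\ell.
$$

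Finally I would show $\bar\Phi^\ast$ is an isomorphism onto $B^\ast_\ell$. The rank of $(E/I)^k$ equals the number of $k$-subsets $T\subset\{1,\dots,n-1\}$ such that $T\cup\{n\}$ is short or median. To match this against the rank of the $(-1)^k$-eigenspace of $\tau^\ast$ on $H^k(M_\ell)$, I would use the Morse-theoretic description of $M_\ell$ given by a standard bending function (e.g.\ $f(u)=|u_1+\dots+u_{n-1}|^2$ or the perfect Morse function of \cite{HK}), whose critical points are the collinear configurations and are combinatorially indexed by the short subsets of $\{1,\dots,n\}$. Critical points are $\tau$-fixed, so the Morse complex splits according to the $\tau^\ast$-eigenspaces and supplies explicit rank formulas for the eigenspaces. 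A separate, and key, ingredient is producing an additive basis of $B^\ast_\ell$ made of monomials $X_T=\prod_{i\in T}X_i$ indexed by $T$ with $T\cup\{n\}$ not long, thereby simultaneously demonstrating surjectivity onto $B^\ast_\ell$ and the absence of further relations.

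The main obstacle is this last step: identifying the balanced subalgebra cell-by-cell with monomials in the $X_i$'s. Showing that the relations in $I$ exhaust the kernel requires a nontrivial combinatorial identity (between non-long subsets and the balanced Betti numbers) and, more seriously, requires certifying that the image of $\Phi^\ast$ already covers the full balanced subalgebra, which is not obvious a priori and is most naturally handled by the Morse-theoretic bookkeeping sketched above.
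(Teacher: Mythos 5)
Your setup coincides with the paper's: both embed $M_\ell$ into the torus $W\cong T^{n-1}$ of arm configurations, identify $X_i$ with the restriction of the standard degree-one generators, and reduce the theorem to two claims, namely that the image of the restriction map $j^\ast\:H^\ast(W)\to H^\ast(M_\ell)$ is exactly $B^\ast_\ell$ and that its kernel is exactly $I$. Your verification that $I\subseteq\ker j^\ast$ (the map $\Psi_T$ misses $(1,\dots,1)$ when $T\cup\{n\}$ is long, so it factors through $(S^1)^k$ minus a point, which has trivial $H^k$) is correct and is essentially the paper's geometric observation that the dual subtorus $W_J$ is disjoint from $M_\ell$.

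The gap is that the two hard claims are only gestured at, and the route you sketch for them does not work as stated. First, for the surjectivity onto $B^\ast_\ell$ and the reverse inclusion $\ker j^\ast\subseteq I$, the paper does \emph{not} run equivariant Morse theory on $M_\ell$: for generic $\ell$ the involution $\tau$ acts freely on $M_\ell$ (the fixed points are collinear configurations, which exist only in the non-generic case), so there is no $\tau$-invariant Morse function on $M_\ell$ whose critical points are $\tau$-fixed, and the claimed eigenspace splitting of ``the Morse complex of $M_\ell$'' has no obvious meaning. What the paper actually proves (Lemma \ref{lm1}) is that $\tau^\ast$ acts as $(-1)^i$ on $H^i(W)$ \emph{and} on $H^i(W,M_\ell)$; the second statement is obtained by passing to the sublevel manifold $W^a=f_\ell^{-1}((-\infty,a])$ of the robot-arm distance function, computing that $\tau$ has degree $(-1)^{n-1}$ there (it acts by $-1$ on the tangent space at each fixed point), and then using naturality of the cap product with the fundamental class of $(W^a,\partial W^a)$. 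Feeding this into the long exact sequence of the pair $(W,M_\ell)$ shows that the $(-1)^i$-eigenspace of $H^i(M_\ell)$ is precisely $\mathrm{im}(j^\ast)=\coker(\psi_i)$, with no rank-counting needed. Second, the identification $\ker j^\ast=\mathrm{im}(\psi_i)=\mathrm{span}\{X_J: J\ \mbox{long},\ n\in J,\ |J|=i+1\}$ rests on a substantive homological computation (that the classes $[W_J]$ for long $J\ni n$ generate the image of $H_{n-1-i}(W-M_\ell)\to H_{n-1-i}(W)$, quoted from \S 5 of \cite{FS}); this is exactly the ``nontrivial combinatorial identity'' you flag as the main obstacle, and without it or the eigenvalue computation on $H^\ast(W,M_\ell)$ your argument establishes only a surjection $E/I\twoheadrightarrow\mathrm{im}(j^\ast)\subseteq B^\ast_\ell$, not the theorem.
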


\begin{proof} Consider the robot arm with $n$ bars of length $l_1,
\dots, l_n$. It is a simple mechanism consisting of bars (links)
connected by revolving joints. The initial point of the robot arm
is fixed on the plane.
\begin{figure}[h]
\begin{center}
\resizebox{6cm}{4cm} {\includegraphics[90,368][440,632]{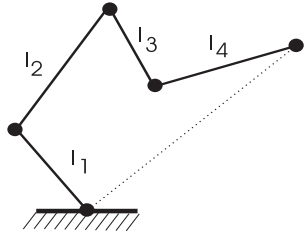}}
\end{center}
\caption{Robot arm.}\label{arm}
\end{figure}
The moduli space of a robot arm (i.e. the space of its possible
shapes) is
\begin{eqnarray}\label{w}
W=\{(u_1, \dots, u_n)\in S^1\times \dots\times S^1\}/{ S^1}.
\end{eqnarray}
Clearly, $W$ is diffeomorphic to a torus $T^{n-1}$ of dimension
$n-1$. A diffeomorphism can be specified, for example, by
assigning to a configuration $(u_1, \dots, u_n)$ the point
$(u_1u_n^{-1}, u_2u_n^{-1}, \dots, u_{n-1}u_n^{-1})\in T^{n-1}$
(measuring angles between the directions of the last and the other
links). The moduli space of closed polygons $M_\ell$ (where
$\ell=(l_1, \dots, l_n)$) is naturally embedded into $W$. We will
study the homomorphism \begin{eqnarray}\label{induced} j^\ast:
H^\ast(W) \to H^\ast(M_\ell)\end{eqnarray} induced by the
inclusion $j: M_\ell \to W$ on cohomology with integral
coefficients. Since $W=T^{n-1}$, the cohomology $H^\ast(W)$ is an
exterior algebra in $n-1$ generators $X_1, \dots, X_{n-1}$. Here
$X_i\in H^1(W)$ is the cohomology class represented by the map
$f_i: W\to S^1$ given by $f_i(u_1, \dots, u_n) = u_iu_{n}^{-1}\in
S^1$, where $i=1, \dots, n-1$.

Theorem \ref{thm2} would follow once we have shown that:
\begin{enumerate}
\item[(i)] The image of (\ref{induced}) equals the balanced
subalgebra $B_\ell^\ast$;

\item[(ii)] The kernel of (\ref{induced}) coincides with the ideal
$I$ (described above) after the identification $H^\ast(W)=E(X_1,
\dots, X_{n-1})$.
\end{enumerate}

To prove statement (i) we note that the involution $\tau:
M_\ell\to M_\ell$ is the restriction of a natural involution
$\tau: W\to W$ given by formula (\ref{invol}).

\begin{Lemma}\label{lm1}
The action of the involution $\tau^\ast$ on the integral
cohomology groups $H^i(W)$ and $H^i(W, M_\ell)$ coincides with
multiplication by $(-1)^i$, for any $i=0, 1, \dots, n-1$.
\end{Lemma}

We postpone the proof of Lemma \ref{lm1} and continue with the proof
of Theorem~\ref{thm2}. Consider the homomorphism
\begin{eqnarray}\label{phii}
\psi_i: H^i(W, M_\ell) \to H^i(W)
\end{eqnarray}
induced by the inclusion $W\to (W, M_\ell)$. The exact sequence of
the pair $(W, M_\ell)$ gives a short exact sequence
\begin{eqnarray}\label{exseq}
0\to \coker(\psi_i) \to H^i(M_\ell)\stackrel\delta\to
\ker(\psi_{i+1})\to 0.
\end{eqnarray}
The involution $\tau^\ast$ acts on the exact sequence (\ref{exseq})
and according to Lemma \ref{lm1} the action of $\tau^\ast$ is
$(-1)^i$ on $\coker(\psi_i)$ and $(-1)^{i+1}$ on $\ker
\psi_{i+1}$. This implies that the balanced subalgebra $B^i_\ell$
coincides with the image of $\coker(\psi_i)$ in $H^i(M_\ell)$. In
other words, we have established statement (i).

To prove statement (ii) we show that the image of $\psi_i$ (see
(\ref{phii})) is generated by the monomials $X_{r_1}\dots X_{r_i}$
where $1\leq r_1<r_2<\dots<r_i<n$ are such that the subset $\{r_1,
\dots, r_i\}\cup \{n\}$ is long.

For a subset $J=\{r_1, r_2, \dots, r_i, n\}$ where $1\leq r_1<r_2
<\dots <r_i<n$, we denote the corresponding product $X_{r_1}\dots
X_{r_i}$ by $X_J$. The cohomology class $X_J\in H^i(W)$ is
realized by the continuous map $\psi_J: W\to T^{i}$ given by
$$(u_1, \dots, u_n) \mapsto (\frac{u_{r_1}}{u_n},
\frac{u_{r_2}}{u_n}, \dots \frac{u_{r_i}}{u_n}) \in T^i.$$ The
preimage of the point $(1, 1, \dots, 1)\in T^i$ under $\psi_J$
equals the variety of all states of the robot arm such that all
links $r_1, \dots, r_i$ are parallel to the last link $n$. Denote
this variety by $W_J\subset W$. Clearly, $W_J$ is diffeomorphic to
a torus of dimension $n-|J|=n-1-i$. The submanifold $W_J$ is
Poincar\'e dual to the cohomology class $X_J$. For obvious
geometric reasons, a subset $J\subset \{1, \dots, n\}$ is long if
and only iff the corresponding submanifold $W_J\subset W$ is
disjoint from $M_\ell$.

In the following commutative diagram
\begin{eqnarray}\xymatrix @C=3.5pc {H^i(W,M_\ell)\ar[r]^{\psi_i}\ar[d]_\simeq& H^i(W)\ar[d]^\simeq \\
H_{n-1-i}(W-M_\ell)\ar[r]^-{\phi_{n-1-i}}&H_{n-1-i}(W)}\label{cd}\end{eqnarray}
the vertical maps are Poincar\'e duality isomorphisms. Here
$\phi_{n-1-i}$ is induced by the inclusion $W-M_\ell\to W$. In \S
5 of \cite{FS} it was shown that the image of
$\phi_{n-1-i}$ is generated by the homology classes $[W_J]\in
H_{n-1-i}(W)$ of the submanifolds $W_J$ where $J\subset \{1,
\dots, n\}$ runs over all long subsets of cardinality $i+1$ with
$n\in J$. Since $\{n\}$ is not long, the case $i=0$ is excluded by
our assumption. Hence, the image of $\psi_{i}$ is generated by
cohomology classes $X_J$ where $J\subset \{1, \dots, n\}$ is long,
$|J|=i+1$ and $n\in J$. This completes the proof.
\end{proof}

\begin{proof}[Proof of Lemma \ref{lm1}] Our statement concerning
the action of the involution $\tau$ on $H^i(W)$ is quite obvious
since $W$ is a torus $T^{n-1}=S^1\times S^1\times\dots\times S^1$
and the involution acts as the complex conjugation on each of the
circles.

To prove our claim concerning the action of the involution on $H^i(W,
M_\ell)$ consider
the robot arm distance map (the negative square of the length of the dotted line shown on Figure \ref{arm})
\begin{eqnarray}\label{fel}
f_\ell: W\to \R, \quad f_\ell(u_1, \dots, u_n) = -
\left|\sum_{i=1}^n l_i u_i\right|^2.
\end{eqnarray}
The maximum of $f_\ell$ is achieved on the moduli space
$M_\ell=f_\ell^{-1}(0) \subset W$. The critical points of $f_\ell$
lying in $W-M_\ell$ are exactly collinear configurations which can
be labeled by long subsets $J\subset \{1, \dots, n\}$, see Lemma
7 of \cite{FS}. The involution $\tau: W\to W$ preserves the values
of $f_\ell$ and the fixed points of $\tau$ are collinear
configurations of the robot arm (see \S 4, 5 of \cite{FS}).
Choose a real number $a$ such
$$-\left(   \sum_{i\in J} l_i - \sum_{i\notin J}l_i   \right)^2 <
a< 0$$ for any long subset $J\subset \{1, \dots, n\}$. Then the
manifold (with boundary) $N=f_\ell^{-1}([a,0])$ contains $M_\ell$ as a deformation
retract, see text after Corollary 10 in \cite{FS}. The set $W^a=f_\ell^{-1}((-\infty, a])$
is a compact manifold with boundary and there is a natural isomorphism  $H^i(W,M_\ell)\to H^i(W^a, \partial W^a)$
commuting with the action of the involution. Lemma \ref{lm1} follows once we show that $\tau^\ast x=(-1)^i x$ for $x\in H^i(W^a, \partial W^a)$.

Let $u\in H_{n-1}(W^a, \partial W^a)$ denote the fundamental class. $\tau$ acts as multiplication
by $-1$ on the tangent space of every fixed point (see (14) in \cite{FS}) which means that $\tau$ has degree $(-1)^{n-1}$.
Hence,  $\tau_\ast(u) = (-1)^{n-1}u$.
For a cohomology class $x\in H^i(W^a, \partial W^a)$ we have
$$(-1)^{n-1} x\cap u = x\cap \tau_\ast(u) = \tau_\ast(\tau^\ast x\cap u) = (-1)^{n-1-i}\tau^\ast x \cap u\in H_{n-1-i}(W^a).$$
Here the last equality uses Theorem 4 from \cite{FS}. This shows (by Poincar\'e duality) that $\tau^\ast x = (-1)^i x$ completing the proof.
\end{proof}

\section{Poincar\'e duality defect}\label{secpd}

If the length vector $\ell=(l_1, \dots, l_n)$ is not generic then the space $M_\ell$ has finitely many singular points.
In this section we show
that for a non generic $\ell$ the space $M_\ell$ fails to satisfy the Poincar\'e duality and we describe explicitly the defect.
Denote by \begin{eqnarray}
K^i_\ell\subset H^i(M_\ell)\end{eqnarray}
the set of all cohomology classes $u\in H^i(M_\ell)$ such that $$uw=0 \quad \mbox{for any}\quad w\in H^{n-3-i}(M_\ell).$$
It is obvious that $K^\ast_\ell=\oplus K^i_\ell$ is an ideal in $H^\ast(M_\ell)$.

Recall, that by
\cite{FS}, for $n>3$ one has $$H^{0}(M_\ell)=H^{n-3}(M_\ell)=\Z$$ if and only if the sets $\{n-2, n-1\}$ and $\{n\}$  are short with respect to $\ell$.

\begin{Theorem}\label{defect} Suppose that $\ell=(l_1, l_2, \dots, l_n)$
is such that $0<l_1\le l_2\le \dots \le l_n$ and
$H^{0}(M_\ell)=H^{n-3}(M_\ell) = \Z$. Then one has $$K_\ell^\ast\subset B^\ast_\ell,$$ i.e. all cohomology classes in $K_\ell^\ast$ are balanced.
Moreover, $K^i_\ell$ viewed as a free abelian group, has a free basis given by the monomials of the form
\begin{eqnarray}\label{prod}
X_{r_1}X_{r_2}\dots X_{r_i}\end{eqnarray}
where $1\le r_1< r_2 < \dots < r_i < n$ are such that the subset \begin{eqnarray}\label{med1}
\{r_1, r_2, \dots, r_i, n\}\subset \{1, \dots, n\}\end{eqnarray} is median with respect to $\ell$.
\end{Theorem}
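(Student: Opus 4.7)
The strategy is to combine the explicit description of the balanced subalgebra $B^\ast_\ell$ furnished by Theorem \ref{thm2} with Poincar\'e--Lefschetz duality on the manifold-with-boundary $N = f_\ell^{-1}([a,0])$ that retracts onto $M_\ell$ from the proof of Lemma \ref{lm1}. Three things must be verified: (a) each monomial $X_{r_1}\cdots X_{r_i}$ with $\{r_1,\ldots,r_i,n\}$ median is annihilated by every class in $H^{n-3-i}(M_\ell)$; (b) every monomial $X_{r_1}\cdots X_{r_i}$ with $\{r_1,\ldots,r_i,n\}$ short has a nontrivial cup-product partner in $H^{n-3-i}(M_\ell)$; (c) no nonzero anti-balanced class lies in $K^\ast_\ell$, so $K^\ast_\ell\subset B^\ast_\ell$.

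For (a), decompose $H^\ast(M_\ell)$ into $\tau^\ast$-eigencomponents $B^\ast\oplus A^\ast$ as in the proof of Theorem \ref{thm2}. On the balanced side, fix $J=\{r_1,\ldots,r_i\}$ with $T(J):=J\cup\{n\}$ median, and let $J'\subset\{1,\ldots,n-1\}$ with $|J'|=n-3-i$. Using Theorem \ref{thm2} I first discard the case $J\cap J'\neq\emptyset$, where the exterior product already vanishes. In the disjoint case, if $J'\neq\emptyset$, medianness of $T(J)$ gives
$$\textstyle\sum_{r\in T(J)\sqcup J'}l_r \,=\, \sum_{r\in T(J)}l_r+\sum_{r\in J'}l_r \,>\, \sum_{r\notin T(J)}l_r \,\geq\, \sum_{r\notin T(J\cup J')}l_r,$$
so $T(J\cup J')$ is long and $X_J X_{J'}=0$ by Theorem \ref{thm2}. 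The potential case $J'=\emptyset$ forces $|J|=n-3$, but the hypothesis $H^{n-3}(M_\ell)=\Z$ entails that $\{n-2,n-1\}$ is short, and the ordering condition then forces every $2$-element subset to be short, which rules out a median $T(J)$ of size $n-2$. On the anti-balanced side, naturality of the connecting map in the form $\delta(j^\ast X_J\cdot u)=X_J\cdot\delta(u)$ together with the Poincar\'e--Lefschetz identification $H^\ast(W,M_\ell)\cong H_{n-1-\ast}(W\setminus M_\ell)$ reduces the annihilation to the geometric statement that intersecting a cycle in $W\setminus M_\ell$ with the dual submanifold $W_J$ produces a boundary; for median $T(J)$ this holds because $W_J\cap M_\ell$ consists only of the isolated singular (collinear) points of $M_\ell$, each admitting a local conical model along the lines of \cite{FS}.

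For (b) and (c), I use Poincar\'e--Lefschetz duality on $(N,\partial N)$ (after a local resolution near the singular points when $\ell$ is non-generic) to exhibit nontrivial pairings. If $T(J)$ is short, the same arithmetic above shows that the balanced part of $H^{n-3-i}(M_\ell)$ fails to produce a nonzero product, so the nontrivial partner must be anti-balanced; the Poincar\'e--Lefschetz pairing supplies it explicitly from the homology cycle dual to $W_J$ in $W\setminus M_\ell$. For (c), any nonzero anti-balanced class $v\in A^i_\ell$ corresponds under $\delta$ to a nonzero element of $\ker\psi_{i+1}\subset H^{i+1}(W,M_\ell)$ which, via Poincar\'e--Lefschetz on $(N,\partial N)$, is paired nontrivially with a balanced class in $H^{n-3-i}(M_\ell)$. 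A rank count against the Betti numbers computed in \cite{FS} then confirms that the median monomials exhaust $K^\ast_\ell$.

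The main obstacle is the anti-balanced portion of (a) together with the duality argument in (b)--(c): both depend on a delicate Poincar\'e--Lefschetz analysis of $(N,\partial N)$ in the presence of the isolated singularities of $M_\ell$ in the non-generic case. These singular points occur exactly at the collinear configurations associated to median subsets, and a local conical model shows that Poincar\'e--Lefschetz duality persists modulo a defect whose span is precisely the median monomials --- which is what the theorem asserts.
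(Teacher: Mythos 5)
Your step (a) on the balanced side is essentially correct: the arithmetic showing that a median set $T(J)$ together with a disjoint nonempty $J'$ yields a long set $T(J\cup J')$ is right, and this does kill the products of median monomials with balanced classes of complementary degree. The rest, however, has genuine gaps. First, over $\Z$ there is no eigenspace decomposition $H^\ast(M_\ell)=B^\ast\oplus A^\ast$: the paper only has the short exact sequence (\ref{exseq}), in which the ``anti-balanced part'' $\ker\psi_{i+1}$ is a quotient of $H^i(M_\ell)$, not a complementary subalgebra, so products of $X_J$ with ``anti-balanced'' classes cannot be handled by the splitting you posit. Second, and more seriously, the duality you invoke is the wrong one: $N=f_\ell^{-1}([a,0])$ is a compact manifold with boundary of dimension $n-1$, so Lefschetz duality on $(N,\partial N)$ pairs $H^i$ with $H_{n-1-i}(N,\partial N)$; it does not produce the pairing $H^i(M_\ell)\otimes H^{n-3-i}(M_\ell)\to H^{n-3}(M_\ell)$ that defines $K^\ast_\ell$. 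That pairing requires duality on the $(n-3)$-dimensional space $M_\ell$ itself, which is exactly what fails in the non-generic case --- the theorem is a quantification of that failure. Your closing claim that a local conical model shows duality ``persists modulo a defect whose span is precisely the median monomials --- which is what the theorem asserts'' is circular: it assumes the conclusion at the one point where all the work of steps (b) and (c) lies.

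The paper's route avoids all of this. It perturbs $l_n$ to $l_n+\epsilon$ so that median subsets containing $n$ become long while the short/long subsets are unchanged, and uses the Morse function $g$ on the robot-arm space $U$ to build a cobordism $V=g^{-1}[l_n,l'_n]$ that deformation retracts onto $M_\ell$ and has the smooth closed manifold $M_{\ell'}$ as a regular level. The resulting ring map $F:H^\ast(M_\ell)\to H^\ast(M_{\ell'})$ satisfies $F(X_i)=X'_i$, is surjective, and the rank of its kernel is read off from the Morse data (the critical points on the level $g^{-1}(l_n)=M_\ell$ are indexed by median subsets containing $n$). Honest Poincar\'e duality on $M_{\ell'}$, pulled back through the surjection $F$, identifies $K^\ast_\ell$ with $\ker F$, and a rank count against Theorem \ref{thm2} shows the median monomials exhaust the kernel. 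If you want to rescue your approach you would have to develop the intersection theory on the singular space $M_\ell$ from scratch; the perturbation to a nearby generic $\ell'$ is precisely the device that makes this unnecessary.
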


\begin{proof}[Proof of Theorem \ref{defect}]
Replace the length vector $\ell$ by a generic vector $\ell'$ which is obtained by enlarging slightly the longest link $l_n$, i.e. $\ell'=(l'_1, l'_2, \dots, l'_n)$ where
$l'_i=l_i$ for $i=1, \dots, n-1$ and $l'_n=l_n+\epsilon$. We assume that $\epsilon >0$ is so small that the vectors $\ell$ and $\ell'$ have the same set of short and long subsets  $J\subset \{1, \dots, n\}$ while any subset  $J\subset \{1, \dots, n\}$ containing $n$ which is median with respect to $\ell$ becomes long with respect
to $\ell'$.

Below we construct a ring epimorphism
\begin{eqnarray}
F: H^\ast(M_\ell) \to H^\ast(M_{\ell'})
\end{eqnarray}
having the following properties:

(a) $F$ is an isomorphism in dimensions $0$ and $n-3$;

(b) one has
\begin{eqnarray}
F(X_i)=X'_i, \quad i=1, \dots, n-1
\end{eqnarray}
where $X_i\in H^1(M_\ell)$,  $X'_i\in H^1(M_{\ell'})$ denote generators (\ref{generators});

(c) the rank of the kernel of $F: H^i(M_\ell) \to H^i(M_{\ell'})$ equals the number of subsets $J\subset \{1, \dots, n\}$ of cardinality $i+1$ containing $n$ which are median with respect to $\ell$.

Let us show that the existence of such $F$ implies the statement of Theorem \ref{defect}.
We claim that $K^\ast_\ell$ coincides with the kernel of $F$. Indeed,
if $u\in H^i(M_\ell)$ does not lie in $K^i_\ell$ then $uw\not=0$ for some $w\in H^{n-3-i}(M_\ell)$ and applying $F$ we find that
$F(u)F(w)\not=0$, i.e. $u\notin \ker F$. Conversely, if $F(u)\not=0$ then $F(u)w'\not=0$ for some $w'\in H^{n-3-i}(M_{\ell'})$ (since $M_{\ell'}$ is a closed manifold without singularities). Using the surjectivity of $F$ we can write $w'=F(w)$ for some $w\in H^i(M_\ell)$ and therefore $F(uw)\not=0$ implying $uw\not=0$, i.e. $u\notin K^i_\ell$.

Using property (b) and the description of the balanced subalgebra given in Theorem \ref{thm2} we see that all monomials (\ref{prod}), such that the set (\ref{med1}) is median with respect to $\ell$, are mapped trivially by $F$. Property (c) implies that such monomials form the whole kernel $\ker F$, proving Theorem \ref{defect}.

To construct $F$ with properties mentioned above consider the robot arm with links $l_1, l_2, \dots, l_{n-1}$ and its configuration space $T^{n-1}=S^1\times S^1 \times \dots \times S^1$, ($n-1$ times). Consider the map
\begin{eqnarray}
f: T^{n-1} \to \R^2\quad \mbox{ given by} \quad  f(u_1, \dots, u_{n-1}) = \sum_{i=1}^{n-1}l_iu_i
\end{eqnarray}
where $u_i\in S^1$. Denote by $U$ the preimage of the positive real axis $\R_+\subset \R^2$ with respect to $f$, i.e. $U=f^{-1}(\R_+)$.
It is a manifold of dimension $n-2$ which is diffeomorphic to the moduli space of all non-closed shapes of the robot arm with $n-1$ links $l_1, \dots, l_{n-1}$. In particular we see that $U$ is canonically embedded into the torus $T^{n-1}$ and therefore we have coordinate projections $\phi_i: U\to S^1$ where $i=1, \dots, n-1$. The cohomology class $X_i\in H^1(U)$ induced by $\phi_i$ \lq\lq measures\rq\rq\, the angle between the link number $i$ and the positive real axis.

The function $g=f|U:\, U\to \R_+$ is Morse (see \cite{Ha} and \cite{FS}) and its critical points are configurations $c_I\in U$ with $u_i=\pm 1$ which can be labeled by subsets
$I\subset \{1, \dots, n-1\}$ satisfying $\sum_{i\in I}l_i > \sum_{i\notin I} l_i$. The critical value $g(c_I)$ equals
$$g(c_I) = \sum_{i\in I}l_i - \sum_{i\notin I} l_i$$ and the Morse index of $g$ at $c_I$ equals $|I|-1$, see \cite{Ha}, Theorem 3.2.

We see that $l_n\in \R_+$ is a critical value of $g$ and the critical level set $g^{-1}(l_n)\subset U$ is exactly the polygon space $M_\ell$.
The critical points of $g$ lying in $g^{-1}(l_n)$ can now be relabeled by subsets $J\subset \{1, \dots, n\}$ containing $n$ which are median with respect to $\ell$. The Morse index of such a critical point is $n-|J|-1$.

Similarly, one has $g^{-1}(l'_n) = M_{\ell'}$. It is a regular level of $g$. Note that the interval $(l_n, l'_n)$ contains no critical values of $g$ as follows from our assumption concerning $\epsilon= l'_n-l_n$.

Consider the preimage $V=g^{-1}[l_n, l'_n]$.
Clearly $V$ is a cobordism $\partial V = M_\ell \sqcup M_{\ell'}$
which has Morse type singularities along the lower boundary $M_{\ell}$.
A standard construction of Morse theory (using the negative gradient flow) gives a deformation retraction of
$V$ onto $M_\ell$; in other words, the inclusion $M_\ell\to V$ is a homotopy equivalence.
We denote by $F$ the map defined by the commutative diagram
\[\xymatrix{ & H^i(V) \ar[ld]_\simeq \ar[rd]& \\
H^i(M_\ell)\ar[rr]^F & & H^i(M_{\ell'})}\]
where both inclined arrows are induced by the inclusions $M_\ell \to V$ and $M_{\ell'}\to V$.

Clearly $F$ is a ring homomorphism having property (b) (see above), i.e. $F(X_i)=X'_i$ for all $i=1, \dots, n-1$.

The homological exact sequence of the pair $(V, M_{\ell'})$, in which one replaces $H^\ast(V)$ by $H^\ast(M_\ell)$, gives the following exact sequence
\begin{eqnarray}
\dots \to H^i(V,M_{\ell'})\stackrel \alpha\to H^i(M_\ell) \stackrel F\to H^i(M_{\ell'}) \stackrel \delta\to H^{i+1}(V, M_{\ell'})\to \dots
\end{eqnarray}
Here $H^i(V, M_{\ell'})$ is a free abelian group of rank equal the number of critical points of index $\dim V - i=n-2-i$ lying in $g^{-1}(l_n)=M_\ell$. In other words, the rank of $H^i(V, M_{\ell'})$ equals the number of subsets $J\subset \{1, \dots, n\}$ with $n\in J$, $|J|=i+1$, which are median with respect to $\ell$.
Hence we see that the rank of the kernel of $F: H^i(M_\ell) \to H^i(M_{\ell'})$ is less or equal than the number of such median subsets. On the other hand, using $F(X_i)=X'_i$ and Theorem \ref{thm2} applied to $M_\ell$ and $M_{\ell'}$, we find that all monomials (\ref{prod}) with (\ref{med1}) lie in this kernel.
Here we use the fact that any subset $J$ containing $n$, which is median with respect to $\ell$, is long with respect to $\ell'$.

We conclude that $\alpha$ is injective and $\delta=0$. In other words, we obtain that $F$ is an epimorphism and its kernel has rank as stated in (c).

The remaining property (a) follows from (c) using our assumption that $H^0(M_\ell)=H^{n-3}(M_\ell)=\Z$.
\end{proof}

\section{Proof of Theorem \ref{thmain}}\label{prfthm1}

Given $\ell\in A^{(n-1)}$, denote by $\cals^0 (\ell)$ the family of subsets $J\subset \{1, \dots, n\}$ such that $n\notin J$ and $J$ is short with respect to $\ell$. Similarly, denote by $\cals^1 (\ell)$ the family of subsets $J\subset \{1, \dots, n\}$ such that $n\in J$ and $J$ is short with respect to $\ell$.

\begin{Lemma}\label{gen} Let $\ell, \ell'\in A^{(n-1)}$ be two ordered length vectors, i.e. $\ell=(l_1, \dots, l_n)$ where $0<l_1\le \dots\leq l_n$ and similarly for $\ell'$.
If for some permutation $\sigma: \{1, \dots, n\}\to \{1, \dots, n\}$ with $\sigma(n)=n$ and for $\nu=0$ or $1$ one has $\sigma(\cals^\nu(\ell)) = \cals^\nu(\ell')$ then $\cals^\nu(\ell)=\cals^\nu( \ell')$.
 \end{Lemma}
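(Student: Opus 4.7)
My plan is to reduce the statement to a rigidity property for shifted simplicial complexes. Since $\sigma$ fixes $n$, it restricts to a permutation of $\{1,\dots,n-1\}$. For $\nu=0$, the family $\cals^0(\ell)$ already lives in $2^{\{1,\dots,n-1\}}$; for $\nu=1$, identify $\cals^1(\ell)$ with $K(\ell):=\{J\subset\{1,\dots,n-1\}\mid J\cup\{n\}\in\cals^1(\ell)\}$ by deleting $n$, and similarly for $\ell'$. In either case, because $\ell$ is ordered and a subset $J\subset\{1,\dots,n-1\}$ contributes to the relevant family via the threshold condition $\sum_{i\in J}l_i<T$ with $T=L/2$ (for $\nu=0$) or $T=L/2-l_n$ (for $\nu=1$), the complex $K(\ell)$ is \emph{shifted}: if $J\in K(\ell)$, $j\in J$, $j'<j$ and $j'\notin J$, then replacing $j$ by $j'$ does not increase $\sum_{i\in J}l_i$, so $(J\setminus\{j\})\cup\{j'\}\in K(\ell)$. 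The hypothesis translates to $\sigma(K(\ell))=K(\ell')$, and we must show $K(\ell)=K(\ell')$.

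The substance of the proof is therefore the following combinatorial claim, which I will establish by induction on $m$: if $K,K'$ are shifted simplicial complexes on $\{1,\dots,m\}$ and some $\sigma\in S_m$ satisfies $\sigma(K)=K'$, then $K=K'$. The key auxiliary observation is that, for a shifted complex $K$, whenever two vertices $i<j$ have the same number of $k$-faces containing them for every $k$, the transposition $(i\,j)$ must be an automorphism of $K$: the shifting property provides an injection $\{F\in K\colon j\in F,\ i\notin F\}\to\{F\in K\colon i\in F,\ j\notin F\}$ via $F\mapsto(F\setminus\{j\})\cup\{i\}$, the equality of vertex-degrees forces this injection to be a bijection in each cardinality, and its inverse is precisely the action of $(i\,j)$ on the target. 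For the inductive step, shiftedness guarantees that the vertex-degree sequences of $K$ and $K'$ are non-increasing; since $\sigma$ is a bijection between these sorted sequences they must be identical, and in particular $\sigma(m)$ ties with $m$ in $K'$ in every cardinality. The auxiliary observation then yields a transposition $\tau=(\sigma(m),m)\in\mathrm{Aut}(K')$, so $\tau\circ\sigma$ still carries $K$ to $K'$ and now fixes $m$. Restricting to $\{1,\dots,m-1\}$, it maps the shifted complexes $\mathrm{del}_K(m)$ and $\mathrm{lk}_K(m)$ to the corresponding subcomplexes of $K'$, and the inductive hypothesis gives $K=K'$.

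The step I expect to be most delicate is the auxiliary observation about transpositions and degree profiles, where one has to verify case by case on the intersection pattern of $F\in K$ with the pair $\{i,j\}$ that $(i\,j)(F)\in K$, the nontrivial case being precisely $i\in F$, $j\notin F$, which is handled by the bijectivity of the shifting injection. Once the rigidity statement for shifted complexes is established, the lemma for both values of $\nu$ follows uniformly from the identifications in the first paragraph.
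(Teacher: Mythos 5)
Your argument is correct, but it takes a genuinely different route from the paper's. The paper proves the lemma by a direct exchange argument on the permutation itself: it picks the smallest index $i$ with $\sigma(i)>i$, replaces $\sigma$ by $(i\,\sigma(i))\circ\sigma$, and checks by a short case analysis on how a set $J\in\cals^\nu(\ell)$ meets the pair $\{i,\sigma^{-1}(i)\}$ --- using only $l'_{\sigma(i)}\ge l'_i$ and $\sigma^{-1}(i)>i$ --- that the modified permutation still carries $\cals^\nu(\ell)$ into, hence (by cardinality) onto, $\cals^\nu(\ell')$; iterating reduces $\sigma$ to the identity. You instead encode both families as shifted complexes on $\{1,\dots,n-1\}$ (deleting $n$ in the case $\nu=1$) and prove the general rigidity statement that a permutation carrying one shifted complex onto another forces the two complexes to coincide, via monotonicity of the per-cardinality vertex-degree sequences together with the observation that a degree tie between $i<j$ makes $(i\,j)$ an automorphism; I have checked the delicate step (surjectivity of the shifting injection turning $(i\,j)$ into an automorphism) and the induction on $m$ via deletion and link, and they are sound. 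The paper's proof is shorter and entirely elementary, exploiting the linear-threshold structure of short sets directly; yours isolates a cleaner and strictly more general combinatorial principle that applies to arbitrary shifted families, at the cost of a two-level induction. One point worth stating explicitly in a write-up: for $\nu=1$ the family $K(\ell)$ need not contain every vertex and may even be empty (when $\{n\}$ is not short), so your rigidity claim must be formulated for arbitrary down-closed, shift-closed families rather than for complexes with full vertex set --- your proof handles this without change, but the statement should say so.
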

 \begin{proof} Let $i$ be the smallest number such that $\sigma(i) >i$. Then $\sigma(k)=k$ for all $k<i$.
 Let $j=\sigma(i)$ (note that $j>i$) and $\sigma'=\alpha\circ \sigma$ where $\alpha=(ij)$ denotes the transposition of $i$ and $j$.
Observe that $\sigma(k)=\sigma'(k)$ for all $k\not=i,\,  r$ where $r>i$ is such that $\sigma(r)=i.$
We want to show that
\begin{eqnarray}\label{given}
\sigma(\cals^\nu(\ell))=\cals^\nu(\ell'),\end{eqnarray}
where $\nu=0$ or $\nu=1$, implies that \begin{eqnarray}\label{impl}\sigma'(\cals^\nu(\ell))=\cals^\nu(\ell').\end{eqnarray}
Once this implication has been established, Lemma \ref{gen} follows by induction since after several iterations the permutation $\sigma$ will be replaced by the identity permutation.

If $J\subset \cals^\nu(\ell)$ is such that either $i\in J$ and $r\in J$, or $i\notin J$ and $r\notin J$ then $\sigma(J)=\sigma'(J)$ and hence $\sigma'(J) \in \cals^\nu(\ell')$ by (\ref{given}).  Consider now the case when  $i\in J$ and $r\notin J$.
Then $\sigma'(J)$ is obtained from $\sigma(J)$ by adding $i$ and removing $j$. Since $\sigma(J)$ is short with respect to $\ell'$ and $l'_j\geq l'_i$, we see that $\sigma'(J)$ is short with respect to $\ell'$.
The remaining possibility is $i\notin J$ and $r\in J$. Then $\sigma'(J) = \sigma(I)$ where $I$ is obtained from $J$ by adding $i$ and removing $r$.
If $J$ is short with respect to $\ell$ then $I$ is short as well (since $r>i$) and hence the set $\sigma(I)=\sigma'(J)$ is short with respect to $\ell'$ .

This argument shows that $\sigma'(\cals^\nu(\ell))\subset \cals^\nu(\ell').$ Since by (\ref{given}) the sets $\cals^\nu(\ell)$ and $\cals^\nu(\ell')$ have equal cardinality, this implies (\ref{impl}).
 \end{proof}

Next we state a theorem of J. Gubeladze \cite{Gu} playing a key
role in the proof of our main results. We are thankful to Lucas
Sabalka who brought this theorem to our attention.

Let $R$ be a commutative ring. Consider the ring
 $R[X_1, \dots, X_{m}]$
of polynomials in variables $X_1, \dots, X_m$ with coefficients in
$R$. A {\it {monomial ideal}} $I\subset R[X_1, \dots, X_m]$ is an
ideal generated by a set of monomials $X_1^{a_1} \dots X_m^{a_m}$
where $a_i\in \Z$, $a_i\geq 0$. The factor-ring $R[X_1, \dots,
X_m]/I$ is called {\it a discrete Hodge algebra}, see \cite{DCEP}.

One may view the variables $X_1, \dots, X_m$ as elements of the
discrete Hodge algebra $R[X_1, \dots, X_m]/I$. The main question
is whether it is possible to recover the relations $X_1^{a_1}\dots
X_m^{a_m}=0$ in $R[X_1, \dots, X_m]/I$ using only intrinsic
algebraic properties of the Hodge algebra. This question is known
as {\it the isomorphism problem} for commutative monoidal rings,
it was solved in \cite{Gu}:

\begin{Theorem}[J. Gubeladze \cite{Gu}, Theorem 3.1]\label{gu}
Let $R$ be a commutative ring and $\{X_1, \dots, X_m\}$, $\{Y_1,
\dots, Y_{m'}\}$ be two collections of variables. Assume that
$I\subset R[X_1, \dots, X_m]$ and $I'\subset R[Y_1, \dots,
Y_{m'}]$ are two monomial ideals such that $I\cap \{X_1, \dots,
X_m\}=\emptyset$ and $I'\cap \{Y_1, \dots, Y_{m'}\}=\emptyset$ and
factor-rings
$$
R[X_1, \dots, X_m]/I \simeq R[Y_1, \dots, Y_{m'}]/I'
$$
are isomorphic as $R$-algebras. Then $m=m'$ and there exists a
bijective mapping
$$\Theta: \{X_1, \dots, X_m\} \to \{Y_1, \dots, Y_m\}$$
transforming $I$ into $I'$. \qed
\end{Theorem}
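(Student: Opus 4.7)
The plan is to recover the underlying monoid of non-zero monomials of $A := R[X_1,\ldots,X_m]/I$ intrinsically from its $R$-algebra structure, so that any isomorphism $\phi \colon A \to A' := R[Y_1,\ldots,Y_{m'}]/I'$ is forced to permute the variables up to units. Once such a bijection $\Theta$ is identified, the fact that it carries $I$ to $I'$ is automatic, because a monomial $X^\alpha$ lies in $I$ if and only if its class in $A$ is zero, and $\phi$ preserves being zero.

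I would begin with two reductions. First, by base-change to residue fields $R/\mathfrak m$, one reduces most of the combinatorial work to the case in which $R$ is a field; the coefficient subring $R\subset A$ is itself intrinsic (once the maximal graded ideal is recovered, $R$ sits inside $A$ as a canonical lift of the residue quotient). Second, over a field, the nilradical of $A$ coincides with $\sqrt{I}/I$, and $A/\mathrm{nil}(A)$ is a Stanley--Reisner ring whose minimal primes are generated by the complements of the facets of the associated simplicial complex. These minimal primes are intrinsic invariants, so the squarefree (radical) part of $I$ is automatically preserved by any $\phi$, yielding a well-defined collection of ``variables modulo nilpotents''.

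The remaining task is to recover the non-squarefree part of $I$ and to lift the variables back to $A$ canonically. Here one works with the $\mathfrak m$-adic filtration, where $\mathfrak m = (X_1,\ldots,X_m)$ is characterized intrinsically as the unique maximal ideal of $A$ of maximal height containing the nilradical. Passing to the associated graded ring---which, because $A$ is already $\mathfrak m$-adically graded by its monomial presentation, coincides with $A$ itself---and then computing the annihilator ideals $\mathrm{Ann}_A(X^\alpha)$, all of which are themselves monomial with explicit combinatorial descriptions, recovers the full primary decomposition of $I$. The bijection $\Theta$ is then read off from the way $\phi$ transports these annihilator ideals.

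The main obstacle is to pin down a single variable $X_i$ intrinsically, as opposed to a \emph{dressed} element like $X_i + X_jX_k$ which has identical first-order behaviour. This is handled by an induction on $m$ together with a sequence of unipotent changes of coordinates modifying $\phi$ until it carries each $X_i$ to a scalar multiple of some $Y_{\Theta(i)}$; the existence of this straightening relies on the rigidity of monomial relations, since every identity $X^\alpha X^\beta = 0$ valid in $A$ translates under $\phi$ into a strong constraint on the lower-order corrections, and iterating these constraints forces the desired diagonal form on $\mathfrak m / \mathfrak m^2$. This combinatorial straightening is the technical heart of Gubeladze's argument and the step most resistant to a quick proof.
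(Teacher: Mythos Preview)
The paper does not prove this theorem at all: it is quoted verbatim as Theorem~3.1 of Gubeladze~\cite{Gu}, marked with a terminal \qed, and used as a black box in the proofs of Theorems~\ref{thmain}, \ref{thmain21}, \ref{thmain31}, and~\ref{thmain2}. So there is no ``paper's own proof'' to compare against; the authors explicitly defer to the cited reference.

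As for your sketch on its own terms: it is a plausible outline of the strategy, but it is not a proof. You yourself flag the decisive step---the unipotent straightening that forces $\phi$ to carry each $X_i$ to a unit times some $Y_{\Theta(i)}$---as ``the technical heart of Gubeladze's argument and the step most resistant to a quick proof,'' and you do not carry it out. Everything prior (reduction to a field, identifying $\mathfrak m$, passing to the associated graded, recovering the radical via Stanley--Reisner combinatorics) is standard preparation and does not by itself constrain $\phi$ enough to yield the bijection $\Theta$. In particular, your claim that ``iterating these constraints forces the desired diagonal form on $\mathfrak m/\mathfrak m^2$'' is exactly the assertion that needs proof, and it is where all the difficulty lies: over a field the quotient $\mathfrak m/\mathfrak m^2$ is just a vector space with no multiplicative structure, so the diagonalization must come from higher-degree information, and organizing that induction is nontrivial. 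If you want to include a self-contained argument you will need to supply that step in full; otherwise, citing \cite{Gu} as the paper does is the appropriate course.
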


\begin{proof}[Proof of Theorem \ref{thmain}] Suppose that $\ell=(l_1,\dots, l_n)\in A$ and $\ell'=(l'_1,
\dots, l'_n) \in A$ are two length vectors such that
there exists a graded ring isomorphism between the integral
cohomology algebras $H^\ast(M_\ell)\to H^\ast(M_{\ell'})$
commuting with the action of the involution $\tau$, see
(\ref{invol}), (\ref{involstar}).
We assume that $\ell=(l_1, \dots,
l_n)$ and $\ell'=(l'_1, \dots, l'_n)$ are ordered, i.e. $l_1\le \dots \le l_n$,
and $l'_1\le \dots \le l'_n$.
Our goal is to show (as Theorem \ref{thmain} states) that then the
length vectors $\ell, \ell'$ lie in the same stratum.

Theorem \ref{thmain} is obviously true in the trivial case $n=3$. The case $n=4$ is also trivial: in this case one has a complete classification of
spaces $M_\ell$ (which are one-dimensional) and their topology is fully determined by Betti numbers $b_0(M_\ell)$ and $b_1(M_\ell)$.
Hence we will assume below that $n>4$.

First we show that it is enough to prove Theorem \ref{thmain} assuming that
\begin{eqnarray}\label{maincase}
H^0(M_\ell)=H^{n-3}(M_\ell) =\Z.
\end{eqnarray}
To this end we recall the main result of \cite{FS} describing explicitly the Betti numbers $b_k(M_\ell)$. Let $a_k(\ell)$ and $\tilde a_k(\ell)$
denote correspondingly the number of short and median subsets
$J\subset \{1, \dots, n\}$ satisfying $n\in J$ and $|J|=k+1$. Then
\begin{eqnarray}\label{betti}
b_k(M_\ell) = a_k(\ell)+ a_{n-3-k}(\ell) + \tilde a_k(\ell),
\end{eqnarray}
see \cite{FS}, Theorem 1.
Using (\ref{betti}) we obtain that (\ref{maincase}) is equivalent to two equations
\begin{eqnarray}\label{1}
a_0(\ell) +a_{n-3}(\ell)+ \tilde a_0(\ell) =1, \quad a_0(\ell)+a_{n-3}(\ell) +\tilde a_{n-3}(\ell)  =1.
\end{eqnarray}

If the subsets $\{n\}$ and $\{n-2, n-1\}$ are short with respect to $\ell$
then clearly $a_0(\ell)=1$ while $\tilde a_0(\ell)=a_{n-3}(\ell)=\tilde a_{n-3}(\ell) =0$; hence (\ref{maincase}) holds in this case.

Consider now all possibilities when the assumptions discussed in the previous paragraph are violated, i.e. when either $n$ is not short or the set
$\{n-2, n-1\}$ is not short.
The table below displays Betti numbers $b_0=b_0(M_\ell)$,  $b_1=b_1(M_\ell)$ and $b_{n-3}=b_{n-3}(M_\ell)$ and the set of all short subsets ${\mathcal S}=\{J\}$ where $J\subset
\{1, \dots, n\}$ in each case. It follows that Betti numbers $b_0(M_\ell)$, $b_1(M_\ell)$ and $b_{n-3}(M_\ell)$ fully characterize each case since they determine the entire set of short subsets ${\mathcal S}$, compare Lemma \ref{stra}.

This shows that Theorem \ref{thmain} holds if  (\ref{maincase}) fails to hold since then the stratum containing $\ell$ (equivalently, the family of short subsets with respect to $\ell$, by Lemma \ref{stra}) can be determined knowing only the Betti numbers in dimensions $0$, $1$ and $n-3$.

\vspace{0.1in}
\begin{center}
\begin{tabular}{|c|c|c|c|c|}
  \hline
Case & $b_0$ & $b_1$ & $b_{n-3}$ & Short subsets $\mathcal S$ \\ \hline\hline
 $n$ long & $0$ & $0$ & $0$ & $J\subset \{1, \dots, n-1\}$\\ \hline
 $n$ median & $1$ & $0$ & $0$ &  $J \varsubsetneq \{1, \dots, n-1\}$ \\ &&&&  \\ \hline
 $n$ short  & $2$ &$2n-6$ &$ 2$& $J\subset \{1, \dots, n\}$, \\ $\{n-2, n-1\}$ long &&&&$J$ contains at most \\ &&&& one of $n-2, n-1, n$ \\ \hline
  $n$ short  & $1$ & $2n-6$ & $2$ & $J\varsubsetneq \{1, \dots, n-3, n\}$,\\ $\{n-2, n-1\}$ median &&&&  \\ $\{n-2, n\}$ long &&&& $J\subset\{1, 2, \dots, n-3, n-2\}$, \\
    $\{n-1, n\}$ long &&&& $J\subset\{1, 2, \dots, n-3, n-1\}$\\
  \hline
 $n$ short  & $1$ & $2n -5$ & $2$ & $J\varsubsetneq \{1, \dots, n-3, n\}$,\\ $\{n-2, n-1\}$ median &&&&  \\ $\{n-2, n\}$ median &&&& $J\varsubsetneq\{1, 2, \dots, n-3, n-1\}$, \\
    $\{n-1, n\}$ long &&&& $J\subset\{1, 2, \dots, n-3, n-2\}$ \\
  \hline
$n$ short  & $1$ & $2n-4$ & $2$ & $J\varsubsetneq \{1, \dots, n-3, n\}$,\\ $\{n-2, n-1\}$ median &&&&  \\ $\{n-2, n\}$ median &&&& $J\varsubsetneq\{1, 2, \dots, n-3, n-1\}$, \\
    $\{n-1, n\}$ median &&&& $J\varsubsetneq\{1, 2, \dots, n-3, n-2\}$ \\  \hline

\end{tabular}
\end{center}
\vspace{0.1in}

Below we give a proof of Theorem \ref{thmain} assuming that (\ref{maincase}) holds for $\ell$ and $\ell'$. This is equivalent to the requirement that the sets
$\{n\}$ and $\{n-2, n-1\}$ are short with respect to $\ell$ and $\ell'$.

Since the isomorphism $f: H^\ast(M_\ell)\to H^\ast(M_{\ell'})$
commutes with the action of the involution $\tau^\ast$ it gives a graded
algebra isomorphism $B^\ast_\ell \to B^\ast_{\ell'}$ between the
balanced subalgebras. In section \S \ref{secpd} we introduced subgroups $K_\ell^\ast\subset H^\ast(M_\ell)$ and $K_{\ell'}^\ast\subset H^\ast(M_{\ell'})$
measuring the failure of the Poincar\'e duality. Clearly $f$ must map $K_\ell^\ast$ onto $K_{\ell'}^\ast$ since these groups are defined using the multiplicative structure of the cohomology algebras.

The structure of $B_\ell^\ast$ and $K_\ell^\ast$ was described in Theorems \ref{thm2} and \ref{defect}. In particular we know that $K^\ast_\ell\subset B^\ast_\ell$. We obtain the following diagram of isomorphisms
$$
\begin{array}{ccc}
H^\ast(M_\ell) & \stackrel f\to & H^\ast(M_{\ell'})\\
\uparrow  &&  \uparrow\\
B^\ast_\ell & \stackrel f\to & B^\ast_{\ell'}\\
\uparrow  &&  \uparrow\\
K^\ast_\ell & \stackrel f\to & K^\ast_{\ell'}
\end{array}
$$
where the vertical maps are inclusions.

Now we apply Theorem \ref{thm2} to the tensor products $\Z_2\otimes
B^\ast_\ell$ and $\Z_2\otimes B^\ast_{\ell'}$. By Theorem  \ref{thm2} they are
 discrete Hodge
algebras $$\Z_2\otimes
B^\ast_\ell= \Z_2[X_1, \dots, X_{n-1}]/L, \quad  \Z_2\otimes
B^\ast_{\ell'} = \Z_2[X'_1, \dots,
X'_{n-1}]/L'.$$ Here $L$ is the monomial ideal generated by the
squares $X_r^2$ (for each $r=1,\dots, n-1$) and by the monomials
$X_{r_1}X_{r_2}\dots X_{r_p}$ for each sequence $1\leq r_1<\dots
<r_p<n$ such that the subset $\{r_1, \dots, r_p\}\cup
\{n\}\subset \{1, \dots, n\}$ is long with respect to $\ell$. The
ideal $L'$ is defined similarly with $\ell'$ replacing $\ell$.

Clearly, $X_i\in L$ if and only if $\{i, n\}$ is long. In this
case one has $X_j\in L$ for all $i\le j<n$. Let $i=i(\ell)$ denote the
smallest index with the property $X_i\in L$. We obtain that the
balanced subalgebra $\Z_2\otimes B^\ast_\ell\subset
H^\ast(M_\ell)$ is isomorphic to $\Z_2[X_1, \dots, X_{i-1}]/I$
where $I$ is the monomial ideal generated by the squares $X_r^2$
(for each $r=1,\dots, i-1$) and by the monomials
$X_{r_1}X_{r_2}\dots X_{r_p}$ for each sequence $1\leq r_1<\dots
<r_p<i$ such that the subset $\{r_1, \dots, r_p\}\cup \{n\}\subset
\{1, \dots, n\}$ is long with respect to $\ell$.

Similarly, one defines the number $i'=i(\ell')$ and the balanced
subalgebra $\Z_2\otimes B^\ast_{\ell'}$ is isomorphic to a
discrete Hodge algebra $\Z_2[X'_1, \dots, X'_{i'-1}]/I'$ where
$L'$ is defined similarly to $I$ with $\ell'$ replacing $\ell$.

Note that ${\rk}\,  B^1_\ell = i -1$ and $\rk\,  B^1_{\ell'} = i'
-1$ and therefore one has $i=i'$.

Now we are in the situation of Theorem \ref{gu} of J. Gubeladze.
We have two discrete Hodge algebras $\Z_2[X_1, \dots, X_{i-1}]/I$
and $\Z_2[X'_1, \dots, X'_{i-1}]/I'$ which are isomorphic. Besides
we know that $X_j\notin I$, $X'_j\notin I'$ for $j=1, \dots, i-1$.
By Theorem \ref{gu} there exists a bijection $\Theta: \{X_1,
\dots, X_{i-1}\} \to \{X'_1, \dots, X'_{i-1}\}$ transforming $I$
into $I'$. This means that there exists a bijection $\theta: \{1,
\dots, i-1\} \to \{1, \dots, i-1\}$ such that for any sequence
$1\leq r_1<\dots <r_p<i$ the subset $\{r_1, \dots, r_p, n\}$ is
short or median with respect to $\ell$ if and only if $\{\theta(r_1), \dots,
\theta(r_p), n\}$ is short or median with respect to $\ell'$. Note that a
short or median subset (with respect to either length vector $\ell$ or
$\ell'$) which contains $n$ cannot contain any element $j$
satisfying $i\leq j<n$. Let $\sigma: \{1, \dots, n\} \to \{1,
\dots, n\}$ be defined by
$$\sigma(j) = \left\{
\begin{array}{ll}
\theta(j) & \mbox{for} \, 1\leq j <i,\\
j & \mbox{for} \, i\le j\leq n.
\end{array}
\right.
$$
We see that the length
vectors $\ell$ and $\sigma(\ell')=(l'_{\sigma(1)}, \dots,
l'_{\sigma(n)})$ have the same family of long subsets containing $n$, or, equivalently, the same family of short subsets not containing $n$.

As in Lemma \ref{gen}, denote by $\cals^0 (\ell)$ the family of subsets $J\subset \{1, \dots, n\}$ such that $n\notin J$ and $J$ is short with respect to $\ell$. Denote by $\cals^1 (\ell)$ the family of subsets $J\subset \{1, \dots, n\}$ such that $n\in J$ and $J$ is short with respect to $\ell$.

We see that
$\cals^0(\ell) = \cals^0(\sigma(\ell'))= \sigma^{-1}(\cals^0(\ell'))$. By Lemma \ref{gen} we obtain $\cals^0(\ell) = \cals^0(\ell')$.

Theorem \ref{defect} combined with Theorem \ref{thm2} imply that the algebra $\Z_2\otimes (B^\ast_\ell/K^\ast_\ell)$ is a discrete Hodge algebra
$$\Z_2\otimes (B^\ast_\ell/K^\ast_\ell) = \Z_2[X_1, \dots, X_{n-1}]/\tilde L$$
where $\tilde L\supset L$ is the monomial ideal generated by the
squares $X_r^2$ (for each $r=1,\dots, n-1$) and by the monomials
$X_{r_1}X_{r_2}\dots X_{r_p}$ for each sequence $1\leq r_1<\dots
<r_p<n$ such that the subset $\{r_1, \dots, r_p\}\cup
\{n\}\subset \{1, \dots, n\}$ is long or median with respect to $\ell$.

Similarly, $\Z_2\otimes (B^\ast_{\ell'}/K^\ast_{\ell'}) = \Z_2[X_1, \dots, X_{n-1}]/\tilde L'.$
The ideal
$\tilde L'$ is defined analogously to $\tilde L$ with vector $\ell'$ replacing $\ell$.

Repeating the arguments above, applying Theorem \ref{gu} of Gubeladze to the isomorphism $\Z_2\otimes (B^\ast_{\ell}/K^\ast_{\ell})\to \Z_2\otimes (B^\ast_{\ell'}/K^\ast_{\ell'})$ and using again Lemma \ref{gen} we find that length vectors $\ell$ and $\ell'$ have identical sets of short subsets containing $n$, i.e. $\cals^1(\ell)=\cals^1(\ell')$.

Lemma \ref{lm4} below implies that $\ell$ and $\ell'$ lie in the same stratum of $A$. This completes the proof.
\end{proof}

\begin{Lemma}\label{lm4} Two length vectors $\ell, \ell'\in A^{(n-1)}$ lie in the same stratum of $A$ if and only if
the following condition holds: a subset $J\subset \{1, \dots, n\}$ containing $n$ is
short (or median) with respect to $\ell$ if and only if it is short (or median, correspondingly) with
respect to $\ell'$.
\end{Lemma}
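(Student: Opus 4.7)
The plan is to reduce the statement to Lemma \ref{stra}, which already says that two length vectors lie in the same stratum iff they share the same family of short subsets (equivalently, the same family of median subsets). The forward direction is then immediate: if $\ell$ and $\ell'$ lie in the same stratum, then by Lemma \ref{stra} every subset $J\subset\{1,\dots,n\}$ has the same short/median/long status with respect to both vectors, and in particular this holds for subsets containing $n$.

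For the reverse direction, I would exploit the complementation duality. The key observation is that for any subset $J\subset\{1,\dots,n\}$, the complement $\bar J=\{1,\dots,n\}\setminus J$ satisfies:
\begin{itemize}
\item $J$ is short w.r.t.\ $\ell$ if and only if $\bar J$ is long w.r.t.\ $\ell$;
\item $J$ is median w.r.t.\ $\ell$ if and only if $\bar J$ is median w.r.t.\ $\ell$.
\end{itemize}
(Both follow directly from the defining inequalities $\sum_{i\in J}l_i \lessgtr \sum_{i\notin J}l_i$.) Now let $J\subset\{1,\dots,n\}$ be an arbitrary subset; I want to show $J$ has the same status for $\ell$ and $\ell'$. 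If $n\in J$, this holds by hypothesis. If $n\notin J$, then $n\in\bar J$, so by hypothesis $\bar J$ has the same status for both vectors. By the complementation duality just noted, $J$ then has the same status (short, median, or long) for both vectors as well.

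Thus the hypothesis forces $\ell$ and $\ell'$ to have identical families of short subsets, and Lemma \ref{stra} concludes that they lie in the same stratum of $A$. There is no real obstacle here: the whole argument is a two-line dualization together with an invocation of Lemma \ref{stra}, and the role of the index $n$ is nothing more than that every subset $J$ either contains $n$ itself or has its complement $\bar J$ containing $n$, so the hypothesis suffices to pin down the status of every subset.
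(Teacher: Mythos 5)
Your proposal is correct and follows essentially the same route as the paper: both reduce the statement to Lemma \ref{stra} and then use the complementation duality ($J$ is short/median/long iff $\bar J$ is long/median/short) to transfer the hypothesis from subsets containing $n$ to all subsets. No gaps.
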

\begin{proof}
In view of Lemma \ref{stra}, we have to show that the condition mentioned in the statement of the lemma implies that $\ell$ and $\ell'$ have identical families of short subsets. This condition specifies short, median and long subsets containing $n$. However, if $n\notin J$ then $J$ is short, median or long iff $\bar J$ is long, median or short, correspondingly.
\end{proof}

\section{Proofs of Theorems \ref{thmain21} and \ref{thmain31}}\label{prfthm34}

 Let $\ell$ be a generic ordered length vector with $n>4$. The planar polygon space $\bar M_\ell$ is defined as the factor of $M_\ell$ with respect to involution (\ref{invol}). Consider the factor-map $\pi: M_\ell\to \bar M_\ell$. Since we assume that $\ell$ is generic, $\pi$ is a twofold covering map. Let $$w_1(\ell)\in H^1(\bar M_\ell;\Z_2)$$ denote its first Stiefel - Whitney class.

  \begin{Lemma}\label{lm7}
 There is a graded algebra isomorphism
 \begin{eqnarray}\label{factor}
 \Z_2\otimes B_\ell^\ast\to H^{\ast}(\bar M_\ell;\Z_2)/(w_1(\ell)).
 \end{eqnarray} Here $B^\ast_\ell\subset H^\ast(M_\ell)$ denotes the balanced subalgebra, see \S \ref{balanced}.
\end{Lemma}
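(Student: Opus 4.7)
The plan is to use the Smith--Gysin sequence of the free double cover $\pi\colon M_\ell\to \bar M_\ell$ to reduce the statement to a comparison of subalgebras inside $H^\ast(M_\ell;\Z_2)$. With $\Z_2$ coefficients, the exact sequence
$$\cdots\to H^{i-1}(\bar M_\ell;\Z_2)\stackrel{\cup w_1}{\to} H^i(\bar M_\ell;\Z_2)\stackrel{\pi^\ast}{\to} H^i(M_\ell;\Z_2)\stackrel{\mathrm{tr}}{\to} H^i(\bar M_\ell;\Z_2)\to\cdots$$
gives a graded ring isomorphism $\overline{\pi^\ast}\colon H^\ast(\bar M_\ell;\Z_2)/(w_1(\ell))\cong\mathrm{Image}(\pi^\ast)\subseteq H^\ast(M_\ell;\Z_2)$, so it suffices to show $\mathrm{Image}(\pi^\ast)=\Z_2\otimes B^\ast_\ell$. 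For the inclusion $\supseteq$, I would exhibit each $X_i$ as a pullback. The map $\phi_i\colon M_\ell\to S^1$, $\phi_i(u_1,\ldots,u_n)=u_iu_n^{-1}$, is $\Z_2$-equivariant with respect to complex conjugation on $S^1$, and the Möbius bundle $\gamma\to S^1$ admits a canonical equivariant lift of conjugation (in the model $\gamma=\bbr\times\bbr/{(x,r)\sim(x+1,-r)}$, the map $(x,r)\mapsto(-x,r)$). Hence $\phi_i^\ast\gamma$ is a $\Z_2$-equivariant line bundle on $M_\ell$ and, since $\tau$ acts freely, descends to a line bundle $L_i\to\bar M_\ell$ with $\pi^\ast L_i=\phi_i^\ast\gamma$; setting $Y_i=w_1(L_i)\in H^1(\bar M_\ell;\Z_2)$ gives $\pi^\ast Y_i=X_i\bmod 2$.

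Using the presentation of $\Z_2\otimes B^\ast_\ell$ supplied by Theorem~\ref{thm2}, the assignment $X_i\mapsto[Y_i]$ extends to a graded ring homomorphism
$$\bar F\colon\Z_2\otimes B^\ast_\ell\to H^\ast(\bar M_\ell;\Z_2)/(w_1(\ell)).$$
To verify $\bar F$ is well defined I apply $\pi^\ast$: the candidate classes $Y_i^{\,2}$ and $Y_{r_1}\cdots Y_{r_k}$ (for $\{r_1,\ldots,r_k,n\}$ long) pull back to $X_i^{\,2}=0$ and $X_{r_1}\cdots X_{r_k}=0$ in $H^\ast(M_\ell;\Z_2)$, so they lie in $\ker\pi^\ast=(w_1(\ell))$ and vanish modulo $w_1(\ell)$. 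The composition $\overline{\pi^\ast}\circ\bar F$ is the natural inclusion $\Z_2\otimes B^\ast_\ell\hookrightarrow H^\ast(M_\ell;\Z_2)$, which is injective since $H^\ast(M_\ell;\Z)$ is torsion-free, so $\bar F$ is injective.

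The main obstacle is proving surjectivity of $\bar F$, equivalently $\mathrm{Image}(\pi^\ast)\subseteq\Z_2\otimes B^\ast_\ell$. My plan is to establish an equivariant analogue of Theorem~\ref{thm2}. The involution $\tau^\ast$ acts trivially on $H^\ast(W;\Z_2)$ (because $\tau^\ast X_i=-X_i\equiv X_i\pmod 2$), so the Serre spectral sequence of the Borel fibration $W\to W_{h\Z_2}\to B\Z_2$ collapses once one notes that each generator $X_i$ lifts to an equivariant class $\hat X_i\in H^1(W_{h\Z_2};\Z_2)$, namely the $w_1$ of the equivariant bundle $\psi_i^\ast\gamma$ obtained by extending $\phi_i$ to $\psi_i\colon W\to S^1$. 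A Künneth argument on the diagonal $\Z_2$-action $W=(S^1)^{n-1}$ combined with the relation $x^2=wx$ in $H^\ast_{\Z_2}(S^1;\Z_2)$ (which is computed explicitly from the two fixed points $\pm 1$) yields
$$H^\ast(W_{h\Z_2};\Z_2)\cong\Z_2[w,\hat X_1,\ldots,\hat X_{n-1}]/\bigl(\hat X_i^{\,2}+w\hat X_i\bigr).$$
Running the argument of Theorem~\ref{thm2} on the $\Z_2$-equivariant pair $(W,M_\ell)$ with the function $f_\ell$ from~(\ref{fel}) (using the equivariant Poincaré--Lefschetz duality mod $2$ and the classification of critical points by long subsets, as in~\cite{FS}) identifies the restriction $\bar j^\ast\colon H^\ast(W_{h\Z_2};\Z_2)\to H^\ast(\bar M_\ell;\Z_2)$ as surjective with kernel generated by the long-subset monomials $\hat X_{r_1}\cdots\hat X_{r_k}$. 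This gives
$$H^\ast(\bar M_\ell;\Z_2)\cong\Z_2[w_1(\ell),Y_1,\ldots,Y_{n-1}]/\bigl(Y_i^{\,2}+w_1(\ell)Y_i,\;\text{long-subset relations}\bigr),$$
and dividing out by $w_1(\ell)$ reproduces the presentation of $\Z_2\otimes B^\ast_\ell$ from Theorem~\ref{thm2}, so $\bar F$ is an isomorphism.
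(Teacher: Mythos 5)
Your strategy is genuinely different from the paper's: the paper simply quotes the Hausmann--Knutson presentation $H^*(\bar M_\ell;\Z_2)=\Z_2[R,V_1,\dots,V_{n-1}]/\mathcal I_\ell$ (\cite[Corollary~9.2, Prop.~9.3]{HK}), observes that $\pi^*(V_i)=X_i$ and $R=w_1(\ell)$, and checks that adding the relation $R=0$ turns that presentation into the one of Theorem~\ref{thm2}; you instead try to derive the needed structure of $H^*(\bar M_\ell;\Z_2)$ from scratch via Borel equivariant cohomology. The first half of your argument is sound: the transfer/Gysin sequence of the free double cover does give a ring isomorphism $H^*(\bar M_\ell;\Z_2)/(w_1(\ell))\cong\mathrm{Im}(\pi^*)$, the equivariant M\"obius-bundle construction correctly produces classes $Y_i$ with $\pi^*Y_i=X_i\bmod 2$, and $\bar F$ is well defined and injective. (For injectivity, though, torsion-freeness of $H^*(M_\ell)$ alone is not enough to make $\Z_2\otimes B^*_\ell\to H^*(M_\ell;\Z_2)$ injective --- compare $2\Z\subset\Z$; what you need is that $H^i(M_\ell)/B^i_\ell$ is torsion-free, which holds because it embeds into $H^i(M_\ell)$ via $u\mapsto u-(-1)^i\tau^*u$.)

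The gap is in the surjectivity step, which is where all the content of the lemma lives. Two problems. First, the presentation you assert, $H^*(\bar M_\ell;\Z_2)\cong\Z_2[w_1(\ell),Y_1,\dots,Y_{n-1}]/(Y_i^2+w_1(\ell)Y_i,\ \mbox{long-subset monomials})$, is false: that ring is infinite-dimensional over $\Z_2$ (every power of $w_1(\ell)$ survives), whereas $H^*(\bar M_\ell;\Z_2)$ is a Poincar\'e duality algebra of formal dimension $n-3$. The actual presentation in \cite{HK} has a third family of relations, $\sum_{S\subset L}R^{|L-S|-1}\prod_{i\in S}V_i$ for each long $L\subset\{1,\dots,n-1\}$, and these are exactly what kill the powers of $R$. (They happen to lie in the ideal generated by $R$ and the (R2) monomials, so your final quotient by $w_1(\ell)$ would still come out right --- but your intermediate claim about $\ker\bar j^*$ is wrong.) Second, and more seriously, the assertion that ``running the argument of Theorem~\ref{thm2} equivariantly'' shows $\bar j^*$ is surjective with the stated kernel is unsupported. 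The proof of Theorem~\ref{thm2} splits the long exact sequence of $(W,M_\ell)$ into short exact sequences using the $(-1)^i$-eigenvalue statement of Lemma~\ref{lm1}, and there is no analogue of that eigenvalue trick in mod-$2$ Borel cohomology. Moreover the critical points of $f_\ell$ in $W-M_\ell$ are collinear configurations, i.e.\ fixed points of $\tau$, so the corresponding equivariant handles are cells with nontrivial $\Z_2$-action; analysing their attachment is precisely where the missing third family of relations arises, and carrying it out amounts to redoing the Hausmann--Knutson computation. As it stands, surjectivity of $\bar F$ --- equivalently, that $H^*(\bar M_\ell;\Z_2)$ is generated in degree one by $w_1(\ell)$ and the $Y_i$ --- is not proved; the quickest repair is to import \cite[Corollary~9.2]{HK} as the paper does.
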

\begin{proof} Consider the induced homomorphism
\begin{eqnarray}\label{piast}
\pi^\ast: H^\ast(\bar M_\ell;\Z_2) \to H^\ast(M_\ell;\Z_2).
\end{eqnarray}
We claim that:

{\it (a) the kernel of $\pi^\ast$ coincides with the ideal $(w_1(\ell))$ generated by the class $w_1(\ell)$ and

(b) the image of $\pi^\ast$ coincides with
the image of the balanced subalgebra $B^\ast_\ell\subset H^\ast(M_\ell;\Z)$ under the coefficient homomorphism $\Z\to \Z_2$. }

A presentation of the cohomology ring $H^*(\bar
M_\ell;\Z_2)$ in terms of generators and relations is given in \cite[Corollary 9.2]{HK}:
$$
H^*(\bar M_\ell;\Z_2)= \Z_2[R,V_1,\dots ,V_{n-1}] /
{\cal I}_{\ell} $$
  where $R$ and $V_1, \dots, V_{n-1}$ are of degree 1
  and ${\cal I}_{\ell}$ is the ideal generated by the three families of elements:
  \bigskip

  \noindent
  (R1) \, $V_i^2+ RV_i$ \, \, where  $i=1,\dots ,n-1$,  \smallskip

   \noindent (R2)  \, ${\displaystyle \prod^{\ }_{i\in S} V_i}$  \,
  where $S\subset \{1,\dots ,n-1\}$ is such that $S\cup \{n\}$ is long,

    \noindent(R3)  \,  ${\displaystyle \sum^{\ }_{S\subset L} R^{|L-S|-1}\prod_{i\in
  S}V_i }$ \, \, where \,   $L\subset \{1,\dots ,n-1\}$ is $L$ long.

  \noindent The symbol $S$  in (R3) runs over all subsets of $L$ including the empty set. By  (R2) a term of the sum in (R3) vanishes if $S\cup \{n\}$ is long.

The class $R$ coincides with $w_1(\ell)\in H^1(M_\ell;\Z_2)$,
the first Stiefel-Whitney class of the regular $2$-cover
$\pi:M_\ell\to\bar M_\ell$ \cite[Prop.~9.3]{HK}. Therefore,
$\pi^*(R)=0$. The classes $V_i$, constructed in
\cite[Section~5]{HK}, are Poincar\'e dual to the sub-manifold of
$\bar M_\ell$ of those configurations where the $i$-th link is
parallel to the last one. Therefore, $\pi^*(V_i)=X_i$ where $X_i$ are generators (\ref{generators}). Hence the image of $\pi^\ast$ lies in the balanced
subalgebra. Note that every relation (R3) is a multiple of $R$ since the term $$R^{|L-S|-1}\prod_{i\in  S}V_i $$ either vanishes (if $S\cup\{n\}$ is long, see (R2)) or the exponent $|L-S|-1$ is positive. Therefore adding a relation $R=0$ to (R1) - (R3) transforms it into the presentation of the balanced subalgebra, see
Theorem \ref{thm2}.  This completes the proof of our statements (a) and (b) concerning the homomorphism (\ref{piast}) which imply Lemma \ref{lm7}.
\end{proof}

\begin{proof}[Proof of Theorem \ref{thmain21}] Assume that $\ell$ is a generic ordered length vector.
First we note that the Stiefel - Whitney class $w_1(\ell) \in H^1(\bar M_\ell;\Z_2)$ can be uniquely determined using intrinsic algebraic properties of the algebra
$H^\ast_\ell=H^\ast(\bar M_\ell;\Z_2)$. Consider the squaring map
\begin{eqnarray}
H^1_\ell \to H^2_\ell, \quad \mbox{where}\quad v\mapsto v^2, \quad v\in H^1_\ell.
\end{eqnarray}
It is a group homomorphism and we claim that for $n>4$ the Stiefel - Whitney class $u=w_1(\ell)\in H^1_\ell$ is the unique cohomology
class $u\in H^1_\ell$ satisfying
\begin{eqnarray}\label{rel}
v^2 = vu, \quad \mbox{for all}\quad v\in H^1_\ell.
\end{eqnarray}
We will use the presentation of the ring $H^*(\bar
M_\ell;\Z_2)$ in terms of generators and relations described in the proof of Lemma~\ref{lm7}.
The class $R$ represents $w_1(\ell)\in H^1(\bar M_\ell;\Z_2)$,
see \cite[Prop.~9.3]{HK}.
Now, relation (R1) (see proof of Lemma \ref{lm7}) tells us that (\ref{rel}) holds for all generators $V_i$. It is obviously true for $R=w_1(\ell)$ as well. Hence
(\ref{rel}) holds for $u=R=w_1(\ell)$ and for all $v$.
To show uniqueness of $u$ satisfying (\ref{rel}) (here we use our assumption $n>4$)
suppose that $u'\in H^1_\ell$ is another class satisfying (\ref{rel}). Then $w=u-u'\in H^1_\ell$ has the property that its product with any class $v\in H^1_\ell$
vanishes. But we know that classes of degree one generate $H^\ast_\ell$ as an algebra (it follows from  \cite[Corollary 9.2]{HK})
and therefore, using Poincar\'e duality, we obtain that $w=0$ assuming that the class $w$ is not of top degree, i.e. if $1< n-3$.

Suppose now that for two generic length vectors $\ell, \ell'$ there is a graded algebra isomorphism $f: H^\ast(\bar M_\ell;\Z_2)\to H^\ast(\bar M_{\ell'};\Z_2)$, $n>4$.  From the result of the previous paragraph it follows that $f(w_1(\ell))=w_1(\ell')$. We obtain that the factor rings $H^{\ast}(\bar M_\ell;\Z_2)/(w_1(\ell))$ corresponding to $\ell$ and $\ell'$ are isomorphic. By Lemma \ref{lm7} we obtain that the rings $\Z_2\otimes B^\ast_\ell$ and $\Z_2\otimes B^\ast_{\ell'}$ are isomorphic. Now, the arguments of the proof of Theorem \ref{thmain} show that $\ell$ and $\ell'$ lie in the same chamber of $A$.
\end{proof}

\begin{proof}[Proof of Theorem \ref{thmain31}]
By \cite[Theorem 6.4]{HK}, the cohomology ring of the spatial polygon space
$H^*(N_\ell;\Z_2)$ admits the same presentation as the presentation of
$H^*(\bar
M_\ell;\Z_2)$ described above with the only difference that now
the variables $V_i$ and $R$  are of degree $2$. Hence, the existence of an algebraic isomorphism $H^\ast(N_\ell;\Z_2) \to H^\ast(N_{\ell'};\Z_2)$ implies the existence of an algebraic isomorphism $H^\ast(\bar M_\ell;\Z_2) \to H^\ast(\bar M_{\ell'};\Z_2)$ and our statement now follows from Theorem \ref{thmain21}.

If the integral cohomology rings $H^\ast(N_\ell)$ and $H^\ast(N_{\ell'})$ are isomorphic then the cohomology rings with $\Z_2$ coefficients are isomorphic as well, since the integral cohomology has no torsion and therefore the $\Z_2$-cohomology is the tensor product of the integral cohomology with $\Z_2$.
\end{proof}

\section{Proof of Theorem \ref{thmain2}}\label{prfthm2}

\begin{Lemma}\label{normal} A length vector $\ell$ is normal if and only
if $H^1(M_\ell) = B^1_\ell$, i.e. when all cohomology classes of degree
one are balanced. Hence for a normal length vector the balanced
subalgebra $B^\ast_\ell$ coincides with the subalgebra of
$H^\ast(M_\ell)$ generated (as a unital algebra) by the set $H^1(M_\ell)$ of one-dimensional
cohomology classes.
\end{Lemma}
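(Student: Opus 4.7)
The plan is to compare $\rk B^1_\ell$ with $b_1(M_\ell)$ using Theorem \ref{thm2} and the Betti-number formula from \cite{FS}, and observe that the discrepancy is governed exactly by the normality condition.

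First, I would reduce to the ordered case. Both the property of being normal and the condition $H^1(M_\ell) = B^1_\ell$ are invariant under permutations of the coordinates of $\ell$ (permuting indices gives a diffeomorphism $M_\ell \to M_{\sigma(\ell)}$ commuting with $\tau$, and Definition \ref{defnormal} is manifestly symmetric). So assume $0 < l_1 \le l_2 \le \dots \le l_n$. By the discussion following Definition \ref{defnormal}, in the ordered case normality is equivalent to $\{n-3,n-2,n-1\}$ being short or median, which in turn (by monotonicity of $l_1 \le \dots \le l_{n-1}$) is equivalent to saying that no 3-element subset of $\{1,\dots,n-1\}$ is long.

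Next, in the generic sub-case that $\{n\}$ is short, Theorem \ref{thm2} applies. It exhibits $B^1_\ell$ as the free abelian group on those generators $X_i$, $1 \le i < n$, for which $\{i,n\}$ is not long; hence $\rk B^1_\ell = a_1(\ell) + \tilde a_1(\ell)$, where $a_1$ (resp.\ $\tilde a_1$) counts short (resp.\ median) pairs containing $n$. Combining this with the formula $b_1(M_\ell) = a_1(\ell) + a_{n-4}(\ell) + \tilde a_1(\ell)$ of \cite{FS}, the inclusion $B^1_\ell \subset H^1(M_\ell)$ is an equality if and only if $a_{n-4}(\ell)=0$. But $a_{n-4}(\ell)$ counts short $(n-3)$-element subsets containing $n$, equivalently long $3$-element subsets disjoint from $\{n\}$; so $a_{n-4}(\ell)=0$ if and only if $\ell$ is normal. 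The boundary cases $\{n\}$ long (where $M_\ell$ is empty) and $\{n\}$ median (where $M_\ell$ is a single point) are disposed of directly: both $H^1$ and $B^1_\ell$ vanish, and in each case one verifies normality at once from $l_n \ge l_1 + \dots + l_{n-1}$, which forces every long $3$-subset to contain $n$.

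Finally, for the second assertion: when $\ell$ is normal, the first part gives $H^1(M_\ell) = B^1_\ell$, so the subalgebra of $H^\ast(M_\ell)$ generated by $H^1(M_\ell)$ is contained in $B^\ast_\ell$; conversely, Theorem \ref{thm2} says $B^\ast_\ell$ is generated as an algebra by $X_1,\dots,X_{n-1}$, and each $X_i$ that is nonzero in $B^\ast_\ell$ (i.e.\ those with $\{i,n\}$ not long) already lies in $B^1_\ell = H^1(M_\ell)$. There is no serious obstacle in this argument; the only delicate point is the rank bookkeeping that identifies $a_{n-4}(\ell)$ with the number of long $3$-subsets not containing $n$, which is precisely the combinatorial quantity measuring the failure of normality in the ordered case.
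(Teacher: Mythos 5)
Your overall strategy is sound and genuinely different from the paper's: you count ranks via the Betti-number formula of \cite{FS} and the presentation of $B^\ast_\ell$ from Theorem \ref{thm2}, whereas the paper works with the exact sequence (\ref{exseq}) and identifies $H^1(M_\ell)/B^1_\ell$ with $\ker(\psi_2)$, which it then kills using the commutative diagram (\ref{cd}) and Corollary 10 of \cite{FS}. Your combinatorial bookkeeping is correct: $\rk B^1_\ell=a_1(\ell)+\tilde a_1(\ell)$, $b_1(M_\ell)=a_1(\ell)+a_{n-4}(\ell)+\tilde a_1(\ell)$, and $a_{n-4}(\ell)$ is exactly the number of long $3$-element subsets of $\{1,\dots,n-1\}$, which vanishes precisely when $\ell$ is normal (in the ordered case). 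The degenerate cases and the second assertion of the lemma are also handled correctly.

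There is, however, one genuine gap at the pivotal step: from $\rk B^1_\ell=\rk H^1(M_\ell)$ you conclude $B^1_\ell=H^1(M_\ell)$. Equality of ranks of a subgroup does not give equality of groups (consider $2\Z\subset\Z$); you need to know in addition that the quotient $H^1(M_\ell)/B^1_\ell$ is torsion-free. This is not automatic from the fact that $H^\ast(M_\ell)$ itself is torsion-free. The missing input is available, and it is precisely what the paper's route supplies: by statement (i) in the proof of Theorem \ref{thm2}, $B^1_\ell$ is the image of $\coker(\psi_1)$, so the exact sequence (\ref{exseq}) gives $H^1(M_\ell)/B^1_\ell\cong\ker(\psi_2)\subset H^2(W,M_\ell)$, and $H^2(W,M_\ell)\cong H_{n-3}(W-M_\ell)$ is free abelian by the Morse-theoretic computation in \cite{FS}; hence the quotient is free and rank equality does force equality. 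Without some such argument (or the direct vanishing criterion for $\ker(\psi_2)$ from Corollary 10 of \cite{FS}, which is what the paper actually invokes), the implication ``normal $\Rightarrow H^1(M_\ell)=B^1_\ell$'' is not established. The converse implication, which only needs the rank inequality, is fine as you wrote it.
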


\begin{proof}[Proof of Lemma \ref{normal}] Without loss of generality we may assume that $\{n\}$ is short since otherwise $\ell$ is normal and $H^1(M_\ell)=0$.
We will use arguments of the proof of Theorem
\ref{thm2}. Our aim is to show that $\ell$ is normal if and only
if the kernel of the homomorphism $\psi_2$ (see (\ref{phii}) with
$i=2$) vanishes. Then the desired equality $H^1(M_\ell) =
B^1_\ell$ would follow from exact sequence (\ref{exseq}) combined
with the arguments of the proof of Theorem \ref{thm2}. By looking at the
commutative diagram (\ref{cd}) we find that $\psi_2$ is injective
if and only if $\phi_{n-3}$ is injective. By Corollary 10 of \cite{FS}
the kernel of $\psi_2$ vanishes if and only if there are no long
subsets $J\subset \{1, \dots, n-1\}$ with $|J|=3$. But this is
 equivalent to normality of $\ell$ as we have explained right after Definition \ref{defnormal}.
 \end{proof}

\begin{Lemma}\label{normal2} Assume that $\ell$ is a length vector satisfying $b_0(M_\ell)=1=b_{n-3}(M_\ell)$.
Then $\ell$ is normal if and
only if the $(n-3)$-fold cup product
\begin{eqnarray}\label{n-3fold}
\mathop{\underbrace{H\otimes H\otimes \dots \otimes H}}_{n-3\, \,
\mbox{times}}\to H^{n-3}(M_\ell), \quad \mbox{where}\quad
H=H^1(M_\ell)
\end{eqnarray}
vanishes.
\end{Lemma}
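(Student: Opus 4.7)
The plan is to leverage the descriptions of the balanced subalgebra $B^\ast_\ell$ and the Poincar\'e-duality defect $K^\ast_\ell$ obtained in Theorems \ref{thm2} and \ref{defect}. After reordering we may assume $\ell$ is ordered. Under the hypothesis $b_0(M_\ell)=b_{n-3}(M_\ell)=1$, the case analysis of the proof of Theorem \ref{thmain} (the table) shows that both $\{n\}$ and $\{n-2,n-1\}$ are strictly short with respect to $\ell$, so both Theorem \ref{thm2} and Theorem \ref{defect} apply.

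For the direction ``$\ell$ normal $\Rightarrow$ the $(n-3)$-fold cup product vanishes'': Lemma \ref{normal} gives $H^1(M_\ell)=B^1_\ell$, so any $(n-3)$-fold cup product of $H^1$-classes lies in $B^{n-3}_\ell$. I then claim $B^{n-3}_\ell=0$. By Theorem \ref{thm2}, a basis of $B^{n-3}_\ell$ is given by monomials $X_{r_1}\cdots X_{r_{n-3}}$ for which $\{r_1,\dots,r_{n-3},n\}$ is short or median. The complement of such a set in $\{1,\dots,n\}$ is a $2$-element subset $\{a,b\}\subset\{1,\dots,n-1\}$ with $l_a+l_b\le l_{n-2}+l_{n-1}$ by the ordering. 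Combined with the shortness of $\{n-2,n-1\}$, this forces $l_a+l_b<\tfrac12\sum_i l_i$, so $\{a,b\}$ is strictly short and its complement strictly long. No basis monomial exists, hence $B^{n-3}_\ell=0$ and the product vanishes.

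For the converse I argue by contrapositive. If $\ell$ is not normal then (by the remark just after Definition \ref{defnormal}) the ordered triple $\{n-3,n-2,n-1\}$ is long, so its complement $T=\{1,\dots,n-4,n\}$ is strictly short. I would focus on the monomial $X_1X_2\cdots X_{n-4}$. Since the ideal $I$ of Theorem \ref{thm2} is generated by monomials attached to long subsets containing $n$, and every sub-subset of $T$ is again short (subsets of short sets are short), no generator of $I$ divides $X_1\cdots X_{n-4}$; thus it is a non-zero basis element of $B^{n-4}_\ell\subset H^{n-4}(M_\ell)$. The crucial combinatorial observation is that $T$ is strictly short and \emph{not} median, so $X_1\cdots X_{n-4}$ is distinct from every basis element of $K^{n-4}_\ell$ listed in Theorem \ref{defect} and therefore does not lie in $K^{n-4}_\ell$. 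By the definition of the defect, this yields $w\in H^1(M_\ell)$ with $(X_1\cdots X_{n-4})\cdot w\neq 0$ in $H^{n-3}(M_\ell)$, producing a nonvanishing $(n-3)$-fold cup product of degree-one classes and contradicting the hypothesis.

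The delicate step is the conclusion $X_1\cdots X_{n-4}\notin K^{n-4}_\ell$, not merely $X_1\cdots X_{n-4}\neq 0$ in $B^{n-4}_\ell$; this is exactly where one must invoke that Theorem \ref{defect} gives $K^{n-4}_\ell$ as a \emph{free} abelian group on the median monomials, so that distinct basis monomials of $B^{n-4}_\ell$ are linearly independent in $H^{n-4}(M_\ell)$ modulo $K^{n-4}_\ell$. Once this is in place, $K^\ast_\ell$ acts as the correct replacement for Poincar\'e duality in the possibly singular case, which is what makes the argument applicable to the non-generic length vectors needed in Theorem \ref{thmain2}.
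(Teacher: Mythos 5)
Your proof is correct, and while the forward implication coincides with the paper's (normality gives $H^1(M_\ell)=B^1_\ell$ via Lemma \ref{normal}, and $B^{n-3}_\ell=0$ because every $2$-element subset of $\{1,\dots,n-1\}$ is short once $\{n-2,n-1\}$ is), your converse takes a genuinely different route. The paper argues via the involution: it takes a non-balanced class $u\in H^1(M_\ell)$, symmetrizes to $u_0=u+\tau^\ast u\neq 0$, produces a dual class $v$ (using Poincar\'e duality in the generic case, and the epimorphism $F$ from the proof of Theorem \ref{defect} otherwise), and then symmetrizes $v$ to a \emph{balanced} class $v_0$ with $v_0u_0=2w\neq 0$, so that Theorem \ref{thm2} expresses $v_0$ as a degree-$(n-4)$ polynomial in $H^1$. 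You instead stay entirely inside the balanced subalgebra: non-normality of the ordered vector makes $\{n-3,n-2,n-1\}$ long, hence $T=\{1,\dots,n-4,n\}$ strictly short, and the corresponding monomial $X_1\cdots X_{n-4}$ is a basis element of $B^{n-4}_\ell$ lying outside $K^{n-4}_\ell$ because the basis of $K^{n-4}_\ell$ from Theorem \ref{defect} consists exactly of the \emph{median} monomials; the definition of $K^{n-4}_\ell$ then hands you a degree-one class $w$ with $X_1\cdots X_{n-4}\cdot w\neq 0$. This is cleaner in two respects: it avoids the sign/degree computations for $\tau^\ast$ on the top cohomology and the two-case split between generic and non-generic $\ell$ (your use of $K^\ast_\ell$ handles both uniformly), at the cost of leaning on the full strength of the basis statement in Theorem \ref{defect} rather than just the properties of the map $F$. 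Your identification of the delicate point -- that $X_1\cdots X_{n-4}\notin K^{n-4}_\ell$ requires the median monomials to be part of a free basis of $B^{n-4}_\ell$, not merely spanning -- is exactly right and is justified since the non-long monomials form a free basis of $E(X_1,\dots,X_{n-1})/I$. (Two cosmetic remarks: the final sentence of your converse should read as a direct implication rather than a contradiction, since you set it up as a contrapositive; and the paper's own phrase ``median or long'' in its forward direction appears to be a typo for ``median or short'', which your version silently corrects.)
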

\begin{proof}
If $\ell$ is normal then $H=B^1_\ell$ by Lemma \ref{normal}. We will use the description of the balanced subalgebra given by Theorem \ref{thm2}.
We obtain that $H$ is generated by the classes $X_1, X_2, \dots, X_{n-1}$ and a product $X_{r_1}X_{r_2}\dots X_{r_{n-3}}$ vanishes if and only if the set $\{r_1, \dots, r_{n-3}, n\}$ is long. Since $\ell$ is normal, the set $\{n-3, n-2, n-1\}$ is median or long which implies that any subset $J\subset \{1, \dots, n-1\}$ of cardinality $2$ is short. This proves that the product (\ref{n-3fold}) vanishes.

Conversely, suppose that $\ell$ is not normal. Then by Lemma
\ref{normal} the group $H=H^1(M_\ell)$ contains a non-balanced
element $u$, i.e. $\tau^\ast(u) \not=-u$. Hence
$u_0=u+\tau^\ast(u)$ is nonzero and satisfies
$\tau^\ast(u_0)=u_0$.
We claim that there exists $v\in
H^{n-4}(M_\ell)$ such that $vu_0=w\in H^{n-3}(M_\ell)$ is nonzero.
If $\ell$ is generic this claim follows from Poincar\'e duality.
If $\ell$ is not generic, we proceed as in the proof of Theorem \ref{defect}
replacing $\ell$ by a close-by generic length-vector $\ell'$ which is obtained by enlarging slightly the longest link $l_n$, i.e. $\ell'=(l'_1, l'_2, \dots, l'_n)$ where
$l'_i=l_i$ for $i=1, \dots, n-1$ and $l'_n=l_n+\epsilon$. Here $\epsilon >0$ is so small that the vectors $\ell$ and $\ell'$ have the same set of short and long subsets  $J\subset \{1, \dots, n\}$ while any subset  $J\subset \{1, \dots, n\}$ containing $n$ which is median with respect to $\ell$ becomes long with respect
to $\ell'$. In the proof of Theorem \ref{defect} we constructed
a ring epimorphism
$
F: H^\ast(M_\ell) \to H^\ast(M_{\ell'})
$
which is an isomorphism in the top dimension $n-3$ and is such that the kernel of $F$ coincides with the subgroup $K^\ast_\ell$ which lies in $B^\ast_\ell$.
Since $u_0\notin B^1_\ell$ we obtain $F(u_0)\not=0$ and applying the Poincar\'e duality (note that $M_{\ell'}$ is a closed orientable manifold without singularities) we find that $F(u_0)v_0\not=0$ for some $v_0\in H^{n-4}(M_{\ell'})$. Since $F$ is an epimorphism we may write $v_0=F(v)$ and we see that
$u_0v\not=0$ since $F(u_0v)\not=0$.

From our assumptions $n>3$ and $b_0(M_\ell)=1=b_{n-3}(M_\ell)$ it follows that the sets $\{n-2, n-1\}$ and $\{n\}$ are short and then Theorem
\ref{thm2} implies that there are no balanced classes in the top dimensional cohomology group $H^{n-3}(M_\ell)\simeq \Z$. Since $\tau(w)=\pm w$ we find
$\tau^\ast(w)=-(-1)^{n-3}w.$ Applying
$\tau^\ast$ we obtain $\tau^\ast(v)u_0=(-1)^{n-4}w$ and therefore
\begin{eqnarray}\label{hence}
v_0u_0=2w\quad \mbox{where}\quad v_0 = v+(-1)^{n-4}\tau^\ast(v).
\end{eqnarray}
Since $v_0$ is balanced (as is obvious from its definition (\ref{hence})) it can
be expressed as a polynomial of degree $n-4$ in $H$ (by Theorem
\ref{thm2}).
Hence we see that the $(n-3)$-fold cup-product
(\ref{n-3fold}) is nonzero.
\end{proof}

\begin{proof}[Proof of Theorem \ref{thmain2}] We shall assume that $n>3$ as for $n=3$ our statement is trivial.
Suppose that $\ell$ is
normal and there exists a graded algebra isomorphism
$f: H^\ast(M_\ell)\to H^\ast(M_{\ell'})$. We want to show that
$\ell'$ is also normal. Assume first that $b_0(M_\ell)=1=b_1(M_\ell)$. Then we may apply Lemma \ref{normal2} which characterizes normality in intrinsic
algebraic terms of the cohomology algebra. Hence $\ell'$ must be also normal.

If the condition $b_0(M_\ell)=1=b_1(M_\ell)$ is violated for $\ell$ it is also violated for $\ell'$.
Examining the table in the proof of Theorem \ref{thmain} we conclude that the Betti numbers $b_0(M_\ell)$ and $b_1(M_\ell)$ determine then the family of short subsets and hence the stratum of $\ell$ (by Lemma \ref{stra}).

Next we apply Lemma \ref{normal} which implies that the isomorphism $f: H^\ast(M_\ell)\to H^\ast(M_{\ell'})$
maps the balanced subalgebra $B^\ast_\ell$ isomorphically onto $B^\ast_{\ell'}$ since in the case of a normal length vector the balanced subalgebra coincides with the subalgebra generated by one-dimensional cohomology classes. One repeats the arguments of the proof of Theorem \ref{thmain} to conclude that then
$\ell$ and $\ell'$ lie in the same stratum.
\end{proof}



\begin{thebibliography}{GKM}



\bibitem{DCEP} C. De Concini, D. Eisenbud, C. Processi,
\textit{Hodge algebras}, Asterisque 91(1982).

\bibitem{FS}
M. Farber and D. Sch\"utz, \textit{Homology of planar polygon spaces},
arXiv:math.AT/0609140, 2006, to appear in Geometriae Dedicata.

\bibitem{Fa} M. Farber, \textit{Topology of random linkages}, preprint

\bibitem{Gu}
J. Gubeladze, \textit{The isomorphism problem for commutative monoid
rings}, \emph{Journal of Pure and Applied Algebra} \textbf{129}
(1998),  35-65.


\bibitem{Ha}
J.-C. Hausmann,\textit{Sur la topologie des bras articul\'es}, in
\emph{Algebraic topology Pozna\'n 1989}, Springer Lecture Notes
\textbf{1474} (1989), 146--159.


\bibitem{HK}
J.-C. Hausmann and A. Knutson, \textit{The cohomology rings of polygon
spaces}, {\em Ann. Inst. Fourier (Grenoble)} \textbf{48} (1998),
281--321.

\bibitem{HR}
J.-C. Hausmann and E. Rodriguez, \textit{ The space of clouds in an
Euclidean space}, {\em Experimental Mathematics.}  \textbf{13}
(2004), 31-47.



\bibitem{KM1} M. Kapovich, J.L. Millson, \textit{On the moduli space of
polygons in the Euclidean plane}, J. Diff. Geometry
\textbf{42}(1995), 133-164.

\bibitem{KM}
M. Kapovich and J. Millson, ``The symplectic geometry of polygons
in Euclidean space'', {\em J. Differential Geom.} \textbf{44}
(1996), 479--513.

\bibitem{Ka}
 Y. Kamiyama, \textit{The homology of singular polygon spaces}, {\em
 Canad. J. Math.} \textbf{50} (1998), 581--594.

\bibitem{Kendall} D.G. Kendall, D. Barden, T.K. Carne and H. Le,
\textit{Shape and Shape Theory}, John Wiley \& Sons, Chichester
1999.


\bibitem{Kl} A.A. Klyachko, \textit{Spatial polygons and stable configurations
of points in the projective line}, Algebraic geometry and its
applications, Aspects Math., E25, Vieweg, Braunschweig, 1994,
67-84.

\bibitem{N} S.H. Niemann, \textit{Geometry and Mechanics}, PhD thesis, University of Oxford,
1978.



\bibitem{TW} W. Thurston, J. Weeks, \textit{The mathematics of
three-dimensional manifolds}, Scientific American, July 1986, 94 -
106.






%


%



\bibitem{Wa} K. Walker
\textit{Configuration spaces of linkages}, {\em Bachelor's thesis,
Princeton} (1985).


\bibitem{We}
 A.Wenger,
\textit{Etudes des espaces de configurations de certains syst\`emes articul\'es},
{\em Travail de dipl\^ome, University of Geneva}, 1988.




\end{thebibliography}
\end{document}